\documentclass[12pt,a4paper]{article}
\usepackage[utf8]{inputenc}
\usepackage[T1]{fontenc}
\usepackage{amsmath, amsfonts, amssymb, amsthm}
\usepackage{hyperref}
\usepackage{orcidlink}
\newtheorem{thm}{Theorem}
\newtheorem{lem}{Lemma}
\newtheorem{cor}{Corollary}
\newtheorem{defn}{Definition}

\newtheorem{propn}{Proposition}
\newtheorem{rem}{Remark}

\date{}
\DeclareMathOperator{\tr}{tr}
\begin{document}
		\author
		{Tanay Kochrekar \thanks{tanaykochrekar@gmail.com}
		\quad \quad 
		A. S. Garge \thanks{anuradha.garge@gmail.com} \\
		\textit{\scriptsize{\parbox{\textwidth}{\centering Department of Mathematics, University of Mumbai, Vidyanagari, Santacruz East, Mumbai, Maharashtra, 400098, India.}}}}
	\title{MATRICES OVER NON-COMMUTATIVE RINGS AS SUMS OF FIFTH AND SEVENTH POWERS}
	\date{}
	
	\maketitle	
	\begin{abstract}
		Let $R$ be non-commutative ring with unity. In this paper, we prove that a given matrix in $M_n(R)$ is a sum of $p$-th powers in $M_n(R)$, where $n < p$ if and only if its trace can be written as a sum of $p$-th powers in $M_n(R)$, for $n = 2,3,4$ and $p = 5$ and $n = 4,5,6$ for $p = 7$. This paper extends the results of S. A. Katre and Kshipra Wadikar, $(p=3)$ and Garge $(n \geq p), (p = 5, 7)$, and S. A. Katre and Deepa Krishnamurthi $(n \geq p)$.
	\end{abstract}
	\textbf{Mathematics Subject Classification (2020)}: Primary 11P05; 15A30;
		Secondary 16S50
		
	\textbf{Key words and phrases:} Matrices; Non-commutative rings; trace; Sums of powers;	Waring’s problem
	
	\section{Introduction}
	Expressing a matrix as sum of powers of matrices has always been an active area of research, being led by Carlitz, Newman, Vaserstein, Wadikar, Katre and Richman. Katre and Krishnamurthi\cite{KK} have obtained a result for a matrix over a non-commutative ring to be sum of $p$-th powers when $n \geq p \geq 2$ where $n$ is order of the matrix and $p$ is a prime. Katre-Wadikar \cite{KW} have obtained conditions for the case where $n = 2$, $p = 3,4$. Conditions for matrices over commutative rings to be sums of fifth and seventh powers are obtained by Garge \cite{G}, Garge and Barai \cite{BG} for sixth and eighth powers, Muangma and Rodtes \cite{MR} from ninth to sixteenth power. The problem remains open for the order of matrix over non-commutative rings, $n < p$ and for larger values of the prime power $p$. In this paper, we try to obtain conditions for the case where $n < p$ and $p = 5$ and $p=7$. We will try to write $p$-th powers of matrices having order less than prime $p$ as sums of $p$-th powers of matrices. 
	
	Throughout this paper, $R$ is a non-commutative ring with unity.
	We begin by quoting some results by Katre and Krishnamurthi \cite{KK}.

	\begin{defn}
		Let $\sigma$ be the permutation $(12345)$ in cyclic notation on the symbols $1,2,3,4,5$. Clearly, $(12345)^5 = (1)$.
		
		Define:
		$C: R^5 \rightarrow R$ by
		$C(a_1,a_2,a_3,a_4,a_5) = \sum_{k=1}^{5} \sigma^{k-1}(a_1a_2a_3a_4a_5) = a_1a_2a_3a_4a_5 + a_5a_1a_2a_3a_4 + a_4a_5a_1a_2a_3 + a_3a_4a_5a_1a_2 + a_2a_3a_4a_5a_1.$
		This will be called as a `cyclic sum' over $5$ elements.\end{defn}

	The next proposition which we recall, comments on a relation between cyclic sums and commutators. A commutator is defined for $a,b \in R$ by $[a,b] = ab - ba.$
	
	\begin{propn}{\rm [\cite{KK}, Proposition 2.1]} \label{p1}
		For $a_{1}, a_{2}, \ldots, a_{k} \in R$, the cyclic sum $C\left(a_{1}, a_{2}, \ldots, a_{k}\right)$ is a sum of commutators modulo $k R$.
	\end{propn}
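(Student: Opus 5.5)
The plan is to show that each cyclic rotation of the product $a_1a_2\cdots a_k$ differs from $a_1a_2\cdots a_k$ itself by a single commutator. Then the cyclic sum, which has $k$ terms, becomes $k$ copies of the original product plus a sum of commutators. Here the cyclic sum over $k$ elements is defined exactly as in the definition above, with $\sigma=(12\cdots k)$:
$$C(a_1,\ldots,a_k)=\sum_{j=0}^{k-1}\sigma^{j}(a_1a_2\cdots a_k),$$
where $\sigma^{j}(a_1\cdots a_k)=a_{k-j+1}\cdots a_k\,a_1\cdots a_{k-j}$.

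The key step is to rewrite a general rotation as a commutator correction. Set $x=a_1\cdots a_{k-j}$ and $y=a_{k-j+1}\cdots a_k$, so that $a_1\cdots a_k=xy$ and $\sigma^{j}(a_1\cdots a_k)=yx$. Then
$$yx = xy - (xy-yx) = a_1a_2\cdots a_k - [x,y].$$
This holds for every $j$ with $1\le j\le k-1$. For $j=0$ the term is the product itself and needs no correction.

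Summing over $j=0,\ldots,k-1$ gives
$$C(a_1,\ldots,a_k)=k\,(a_1a_2\cdots a_k)-\sum_{j=1}^{k-1}[x_j,y_j].$$
The first term $k(a_1\cdots a_k)$ lies in $kR$. Also $-[x_j,y_j]=[y_j,x_j]$ is again a commutator, so the second term is a sum of commutators. Hence $C(a_1,\ldots,a_k)$ is congruent to a sum of commutators modulo $kR$, as claimed.

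There is no genuine obstacle. The only point needing care is the indexing of the rotations $\sigma^{j}$, which must be checked to match the definition given for $k=5$: that is, the $j$-th term must really be the product of the last $j$ factors followed by the first $k-j$ factors. It should also be noted that $kR$ here means $\{k r : r\in R\}$, the additive subgroup (in fact an ideal) generated by $k\cdot 1$, so that $k(a_1\cdots a_k)$ lies in it.
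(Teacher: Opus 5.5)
Your proposal is correct and follows essentially the same route as the paper. The paper also rewrites each rotation $yx$ as $xy$ plus the commutator $[y,x]$, namely $[a_2\cdots a_k,\,a_1],\ [a_3\cdots a_k,\,a_1a_2],\ \ldots,\ [a_k,\,a_1\cdots a_{k-1}]$, and obtains $C(a_1,\ldots,a_k)$ as this sum of commutators plus $k\,a_1a_2\cdots a_k$.
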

	\begin{proof} Observe that $a_{1} a_{2} \cdots a_{k}+a_{2} a_{3} \cdots a_{k} a_{1}+\cdots+a_{k} a_{1} a_{2} a_{3} \cdots a_{k-1}=\left(a_{2} a_{3} \cdots a_{k}\right)$ $a_{1}-a_{1}\left(a_{2} a_{3} \cdots a_{k}\right)+\left(a_{3} a_{4} \cdots a_{k}\right)\left(a_{1} a_{2}\right)-\left(a_{1} a_{2}\right)\left(a_{3} a_{4} \cdots a_{k}\right)+\cdots+a_{k}\left(a_{1} a_{2} \cdots\right.$
		$\left.a_{k-1}\right)-\left(a_{1} a_{2} \cdots a_{k-1}\right) a_{k}+k a_{1} a_{2} \cdots a_{k}=\left[a_{2} a_{3} \cdots a_{k}, a_{1}\right]+\left[a_{3} a_{4} \cdots a_{k}, a_{1} a_{2}\right]+\cdots+$ $\left[a_{k}, a_{1} a_{2} \cdots a_{k-1}\right]+k a_{1} a_{2} \cdots a_{k}$.
		
		This proves that $C\left(a_{1}, a_{2}, \ldots, a_{k}\right)$ can be expressed as sum of commutators, modulo $kR$.
	\end{proof} 
	We also consider the action of the cyclic group generated by the $k$-cycle
	$\sigma=(1,2, \ldots, k)$, on the set of $k$-tuples of elements of a set by $\sigma\left(a_{1}, a_{2}, \ldots, a_{k}\right) = \left(a_{2}, a_{3}, \ldots, a_{k}, a_{1}\right)$. If $k = p$ is a prime, by orbit-stabilizer theorem, the number of elements in the orbit of any $p$-tuple divides $p,$ which is the order of the group $\langle\sigma\rangle$. Here, the orbit has $1$ or $p$ elements. Therefore, if at least two of $a_{1}, a_{2}, \ldots, a_{p}$ are unequal, the orbit has exactly $p$ elements.
	
	\begin{propn}{\rm [\cite{KK}, Proposition 2.2]} \label{p3} If $R$ is a non-commutative ring and $n \geq p \geq 2,$ $p$ prime, then for $A \in$ $M_{n}(R)$, trace $\left(A^{p}\right)$ is the sum of pth powers of diagonal elements of $A$ and cyclic sums $C\left(a_{1}, a_{2}, \ldots, a_{p}\right)$ with $a_{1}, a_{2}, \ldots, a_{p} \in R$.
	\end{propn}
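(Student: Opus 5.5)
Expand the trace of $A^p$ entrywise and group the resulting words into orbits under cyclic rotation. Write $A = (a_{ij})$. Then
\[
\operatorname{tr}(A^p) = \sum_{i_1,\ldots,i_p=1}^{n} a_{i_1 i_2} a_{i_2 i_3} \cdots a_{i_{p-1} i_p} a_{i_p i_1},
\]
so the sum runs over all closed walks $w = (i_1, i_2, \ldots, i_p, i_1)$ of length $p$ on the index set $\{1,\ldots,n\}$. To each closed walk associate the $p$-tuple of edges $e(w) = \big((i_1,i_2),(i_2,i_3),\ldots,(i_p,i_1)\big)$. The term contributed by $w$ is the ordered product of the entries $a_e$ along these edges. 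The cyclic group $\langle\sigma\rangle$ of order $p$ acts on closed walks by rotating the starting point, $(i_1,\ldots,i_p)\mapsto(i_2,\ldots,i_p,i_1)$. This corresponds exactly to the action on $p$-tuples recalled just before the proposition, applied to the edge tuple, and rotation preserves the property of being a closed walk.

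By the orbit--stabilizer remark above, since $p$ is prime each orbit has size $1$ or $p$. An orbit of size $1$ consists of a walk with $i_1=i_2=\cdots=i_p$: if the rotated index tuple equals the original then $i_1=i_2=\cdots$. Such a walk $(i,i,\ldots,i)$ contributes $a_{ii}^p$, and there is one such walk for each $i=1,\ldots,n$. These give the $p$th powers of the diagonal entries.

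Now take any orbit of size $p$ and pick a representative walk with edge entries $b_1=a_{i_1i_2}, b_2=a_{i_2i_3},\ldots,b_p=a_{i_pi_1}$. The $p$ walks in the orbit contribute the words $b_1b_2\cdots b_p$, $b_2\cdots b_pb_1$, and so on. I should be careful here, because distinct walks in an orbit could in principle give equal products of ring elements. That does not matter: summing over the $p$ distinct index tuples in the orbit gives exactly $\sum_{k=1}^{p}\sigma^{k-1}(b_1\cdots b_p)$, which is the cyclic sum $C(b_1,\ldots,b_p)$ as defined above for $5$ elements and analogously for general $p$. The orbits partition the set of all $n^p$ index tuples. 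Hence $\operatorname{tr}(A^p)$ equals $\sum_i a_{ii}^p$ plus a sum of cyclic sums, as claimed.

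The only delicate step is to make the orbit decomposition act on index tuples rather than on the $p$-tuple of ring elements. Otherwise coincidences among entries (for instance $a_{12}=a_{21}$) would spoil the orbit count and the identification with $C$. With index tuples the size-$p$ orbits map bijectively onto the $p$ rotations of the word, so this bookkeeping is what needs care. The hypothesis $n\ge p$ plays no essential role in the argument; the decomposition works for any $n$.
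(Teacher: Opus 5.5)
Your proof is correct and is essentially the argument the paper relies on. The statement is only quoted from \cite{KK} here, but Theorem~\ref{strthm} (expansion of $\tr(A^p)$ over closed index walks), Corollary~\ref{orbcor}, and the corollary after it (rotation orbits of size $1$ or $p$, the size-$1$ orbits giving the $a_{jj}^p$) carry out exactly your decomposition. Letting $\langle\sigma\rangle$ act on index tuples rather than on tuples of ring entries tightens the paper's looser phrasing that ``the elements with orbit size $p$ must therefore produce all distinct elements.'' You are also right that $n\ge p$ plays no role in this step.
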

	
	Now the main theorem by Katre-Krishnamurthi \cite{KK}, which states some conditions for an $n \times n$ matrix to be a sum of $p$-th powers of matrices.
	
	\begin{thm}{\rm [\cite{KK}, Theorem 1]}{\label{t1}}
		Consider positive integers $n$ and $p$, such that $n \geq p \geq 2$ and $p$ is a prime. Let $T_p = T_{p,n}$ be the set of those elements of R that can be expressed as sums of traces of $p$-th powers over $M_n(R)$. Let $S$ be a set of representatives of orbits of $p$-tuples of elements of $\{c_1, c_2, \ldots , c_l\} \subseteq R, l \geq p$ and let $S^\prime$ be the set of such representatives in which we have at least two unequal entries. Then, 
		\begin{enumerate}
			\item[(i)] For $a, a_1, a_2, \cdots a_p$, the cyclic sum $C(a_1, a_2 \cdots a_p) \in T_p$. Also $pa \in T_p$, $a^p \in T_p$.
			\item[(ii)] $T_{p}=\biggl\{\sum_{(a_{1}, a_{2}, \ldots, a_{p}) \in S} C \left(a_{1}, a_{2}, \ldots, a_{p}\right) + \sum_{j=1}^{l} c_{j}^{p} \mid l \geq 1, a_{i}, c_{j} \in R, 1 \leq i \leq p, 1 \leq j \leq l\biggr\}.$ 
			\item[(iii)] $T_{p}=\biggl\{\displaystyle \sum_{\left(a_{1}, a_{2}, \ldots, a_{p}\right) \in S} C\left(a_{1}, a_{2}, \ldots, a_{p}\right)+c^{p} \mid a_{i}, c \in R, 1 \leq i \leq p\biggr\}$ where $S$ is a finite subset of $R^p$.
			\item[(iv)] $T_{p}=\biggl\{\displaystyle \sum_{j=1}^{q}\left(a_{j} b_{j}-b_{j} a_{j}\right)+\sum_{j=1}^{l} c_{j}^{p}+p r \mid a_{j}, b_{j}, c_{j}, r \in R, q \geq 1, l \geq 1\biggr\}$.
			\item[(v)] $T_{p}=\biggl\{\displaystyle \sum_{j=1}^{q}\left(a_{j} b_{j}-b_{j} a_{j}\right)+c^{p}+p r \mid a_{j}, b_{j}, c, r \in R, q \geq 1, l \geq 1\biggr\}$.
			\item[(vi)] A matrix $A \in M_{n}(R)$ is a sum of $p$-th powers in $M_n(R)$ if and only if trace(A) is a sum of $p$-th powers and commutators modulo $p R$ if and only if trace $(A)$ is a sum of a $p$-th power and commutators modulo $p R$.
			\item[(vii)] Vaserstein (\cite{V}, Theorem 1): A matrix $A \in M_{n}(R)$ is a sum of squares if and only if trace $(A)$ is a sum of squares modulo $2 R$.
		\end{enumerate}
	\end{thm}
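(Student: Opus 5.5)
The plan is to prove the parts of Theorem \ref{t1} in order, with (i) as the engine and the rest as formal consequences, ending with the matrix criterion (vi). Throughout I write $E_{ij}$ for the matrix units and use $\tr(XY)=\tr(YX)$ modulo commutators in $R$: $\tr(XY)-\tr(YX)=\sum_{i,j}[x_{ij},y_{ji}]$.

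For (i), since $n\ge p$ I can realise a cyclic sum as a trace. Take $A=\sum_{i=1}^{p-1} a_i E_{i,i+1}+a_pE_{p,1}$, a weighted $p$-cycle on the first $p$ coordinates. Then $A^p$ is diagonal, and its $(i,i)$ entry is the product $a_ia_{i+1}\cdots a_{i-1}$ taken cyclically. Hence $\tr(A^p)=C(a_1,\dots,a_p)$. Taking $A=aE_{11}$ gives $a^p\in T_p$. For $pa$, I take all $a_i$ equal to $1$ except $a_1=a$; then $C=pa$.

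For (ii), Proposition \ref{p3} writes $\tr(A^p)$ as $p$-th powers of the diagonal entries plus cyclic sums, and (i) gives the reverse inclusion. Grouping cyclic sums into orbit representatives is bookkeeping.

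For (iii), I reduce many $p$-th powers to one modulo cyclic sums, using the expansion
\[
(c_1+c_2)^p=c_1^p+c_2^p+\sum(\text{mixed words}).
\]
The mixed words of length $p$ in $c_1,c_2$ fall into full orbits of size $p$ under rotation, as noted just before Proposition \ref{p3}. Each orbit sum is exactly a cyclic sum. So $c_1^p+c_2^p$ equals $(c_1+c_2)^p$ minus cyclic sums, and cyclic sums may appear with a minus sign because $-C(a_1,\dots)=C(-a_1,a_2,\dots)$. By induction, $\sum_j c_j^p$ is one $p$-th power plus cyclic sums.

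For (iv) and (v), Proposition \ref{p1} shows each cyclic sum is a sum of commutators plus an element of $pR$. Conversely, I need each commutator and each element of $pR$ to lie in $T_p$. Elements of $pR$ lie in $T_p$ by (i). For commutators I need $[a,b]\in T_p$, and I expect this to be the \textbf{main obstacle}. I would try to realise $ab-ba$ as $\tr(X^p)$ minus known elements of $T_p$. My first attempt is the cyclic sum $C(a,b,1,\dots,1)-C(ab,1,\dots,1)$, which equals $2ab+(p-2)ba-p\,ab$. This is only $\equiv 2(ab-ba)$ modulo $pR$, so it handles $[a,b]$ when $p$ is odd because $2$ is invertible mod $p$. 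For $p=2$ I would instead use $C(a,b)=ab+ba$, which gives $ab-ba=ab+ba-2ba\in T_2$ directly. Since $T_p$ is additive, being a sum-closed set of traces, this completes (iv), and (v) then follows from (iii).

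For (vi), sums of $p$-th powers have trace in $T_p$, which gives necessity. For sufficiency, given $A$ with $\tr A\in T_p$, choose $X$ with $\tr(X^p)=\tr A$ using (i)–(v) and additivity. Then $B=A-X^p$ has trace $0$. I would then cite the classical fact, due to Vaserstein and used by Katre–Krishnamurthi, that a trace-zero matrix with $n\ge p$ is a sum of $p$-th powers. For off-diagonal entries this comes from nilpotents such as $(E_{ii}+rE_{ij})^p=E_{ii}+rE_{ij}$ minus $E_{ii}^p$. Diagonal trace-zero entries are handled by writing $dE_{ii}-dE_{jj}$ via conjugation and cyclic-matrix constructions, and this is where $n\ge p$ is used again. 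Part (vii) is Vaserstein's theorem, quoted.
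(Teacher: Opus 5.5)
Your overall route is the standard one, and it matches what the paper does where it writes proofs itself. The paper only quotes this theorem from Katre--Krishnamurthi, but its proof of Theorem \ref{t53} runs the same chain: an explicit matrix for (i), Proposition \ref{p3} for (ii), the binomial reduction of Lemma \ref{lbi5} for (iii), Proposition \ref{p1} for (iv)--(v), and Theorem \ref{t2} for (vi). Your weighted $p$-cycle matrix for (i) is correct, and it is exactly where $n\ge p$ is used. Two steps need fixing.

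\textbf{The computation in (iv) is wrong.} Among the $p$ rotations of the word $a\,b\,1\cdots 1$, only the rotation beginning with $b$ gives $ba$; the other $p-1$ rotations all give $ab$. So $C(a,b,1,\dots,1)=(p-1)ab+ba$, not $2ab+(p-2)ba$; the two agree only when $p=3$. Consequently
\[
C(ab,1,\dots,1)+C(-a,b,1,\dots,1)=p\,ab-(p-1)ab-ba=ab-ba
\]
holds exactly, for every prime $p$ including $p=2$. Every commutator is therefore a sum of two cyclic sums and lies in $T_p$ by (i). There is no obstacle at this point, and you need neither the invertibility of $2$ modulo $p$ nor a separate case for $p=2$. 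As written, your step is derived from a false identity, even though its conclusion happens to be true.

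\textbf{The sufficiency argument in (vi) has a slip.} From $\tr A\in T_p$ you only get $\tr A=\sum_i \tr(X_i^p)$ for several matrices $X_i$, not $\tr(X^p)$ for a single $X$. You should set $B=A-\sum_i X_i^p$ instead.

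The fact you invoke, that a trace-zero matrix is a sum of $p$-th powers, is equivalent to Theorem \ref{t2}: take all $X_i=0$ there. That theorem holds for all $n,k\ge 2$, so $n\ge p$ is not used at this stage, contrary to your remark.

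The cleanest argument is direct. By Theorem \ref{t2}, $A$ is a sum of $p$-th powers if and only if $\tr A\in T_p$. Parts (iv) and (v) identify $T_p$ with the two sets described in (vi).
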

	The following two theorems act as a base for all of the following investigation. We need to simplify the expression for sums of traces of $k$-th powers of matrices. In general, this is difficult and has been done for $n = 2$ and $k = 3,4.$ by Katre and Wadikar,\cite{KW}. We try a similar simplification for $n = 2,3,4,5$ when $k = 5$ and $n = 2,3,4,5,6$ when $k = 7.$ Given below is theorem \ref{strthm} that establishes the standard form of an element in the power of a matrix.   
	\begin{thm}{\rm [\cite{KW}, Theorem 3.2]}\label{t2}
		Let $n, k \geq 2$ be integers and $A \in M_n(R)$. Then $A$ is a sum of
		$k$-th powers of matrices in $M_n(R)$ if and only if $\tr(A)$ is a sum of traces of $k$-th powers of matrices in $M_n(R)$.
	\end{thm}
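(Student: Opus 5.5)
The plan is to prove the two directions separately. The forward direction is immediate: the trace is additive, so a decomposition $A=\sum_i B_i^k$ gives $\tr(A)=\sum_i \tr(B_i^k)$.

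For the converse, suppose $\tr(A)=\sum_{i=1}^m \tr(B_i^k)$ with $B_i\in M_n(R)$. Put $B=\sum_{i=1}^m B_i^k$, which is a sum of $k$-th powers with $\tr(B)=\tr(A)$. It then suffices to show two things. First, every matrix of trace zero is a sum of $k$-th powers, up to adding matrices we can already handle. Second, the sums of $k$-th powers are closed under addition. With both in hand, $A=B+(A-B)$ finishes the proof, since $\tr(A-B)=0$.

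To handle trace-zero matrices, I would split $A-B$ into an off-diagonal part and a diagonal part.

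\textbf{Off-diagonal entries.} For $i\neq j$ and any $r\in R$, the matrix $rE_{ij}$ is nilpotent of index $2$. I will write it as a combination of $k$-th powers using the matrices $(I+rE_{ij})^k=I+krE_{ij}$ and $(-I)^k$. The problem is the factor $k$, so this naive approach does not work directly. Instead I will use the classical trick for a $2\times 2$ block,
\[
\begin{pmatrix} 1 & r\\ 0 & 0\end{pmatrix}^k=\begin{pmatrix}1&r\\0&0\end{pmatrix},
\]
which is idempotent. Subtracting $E_{ii}=E_{ii}^k$ then gives $rE_{ij}$ as a difference of $k$-th powers.

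To get rid of the negative sign, I need $-E_{ii}$ to be a sum of $k$-th powers. When $k$ is odd this is clear, since $-E_{ii}=(-E_{ii})^k$. When $k$ is even, I will instead use that $-X$ is a sum of $k$-th powers whenever $X$ is a $k$-th power of a nilpotent or idempotent type. Concretely, with $U=\begin{pmatrix}1&r\\0&0\end{pmatrix}$ and $V=\begin{pmatrix}1&0\\0&0\end{pmatrix}$, conjugating by suitable permutation matrices yields trace-adjusting pieces. So the key step, and the main obstacle, is showing that the set of sums of $k$-th powers is an additive group modulo diagonal trace corrections. I would first try to show that every $-E_{ii}$ differs from a sum of $k$-th powers by off-diagonal terms.

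\textbf{Diagonal part.} Here the relevant fact is $\sum_i d_i=0$, but only modulo commutators, because the trace is computed in a non-commutative $R$. Write $\tr(A-B)=0$. Using $\mathrm{diag}(d_1,\dots,d_n)$ together with moves that shift $d_i$ to $d_1$, namely
\[
d E_{ii}-dE_{11}=[dE_{i1},E_{1i}],
\]
and the fact that commutators $[X,Y]=XY-YX$ of matrices can be built from elementary off-diagonal pieces, I would reduce the diagonal part to $\mathrm{diag}(\sum_i d_i,0,\dots,0)$, which is zero. The commutators that appear are of the form $[dE_{i1},E_{1i}]$. I expect to realise these through conjugation identities: $P X^k P^{-1}=(PXP^{-1})^k$ with $P=I+E_{1i}$ elementary, so that a $k$-th power minus its conjugate equals such a commutator up to off-diagonal terms.

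The hardest step is making these differences genuinely into sums, without subtraction, of $k$-th powers, which amounts to proving that the set of sums of $k$-th powers is closed under negation. I would attempt it via the idempotent and nilpotent constructions above, checking the cases $k$ odd and $k$ even separately.
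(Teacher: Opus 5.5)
\textbf{There is a genuine gap in the even-$k$ case.} The paper gives no proof of this statement; it quotes Katre--Wadikar and only notes the trivial direction. So your argument has to stand on its own, and it does not yet: it stops at the step you call ``the hardest step'' and leave open.

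\textbf{What works.} Your reduction to ``every trace-zero matrix is a sum of $k$-th powers'' is fine. For odd $k$ your sketch essentially closes, since $rE_{ij}=(E_{ii}+rE_{ij})^k+(-E_{ii})^k$ and $-X^k=(-X)^k$.

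\textbf{What fails for even $k$.} For even $k$, everything hinges on writing $-E_{11}$ (equivalently, a nonzero trace-zero matrix such as $E_{12}$) as a sum of $k$-th powers. The tools you propose cannot do this. Take $R=\mathbb{Z}$ and $n\le k$:
\begin{itemize}
\item an idempotent integer matrix is its own $k$-th power and has trace equal to its rank, so trace $\ge 0$;
\item a nilpotent integer matrix has $k$-th power $0$;
\item $(I+rE_{ij})^k$ has trace $n$;
\item conjugation, by permutation matrices or by any invertible integer matrix, preserves the trace.
\end{itemize}
So there are no ``trace-adjusting pieces''. Every sum of $k$-th powers built this way has nonnegative trace and can never equal $-E_{11}$. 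Since the theorem must hold over $\mathbb{Z}$, you need a matrix with non-real eigenvalues whose $k$-th power has negative trace. This is the missing idea.

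\textbf{How to supply it.} For $k\equiv 2\pmod 4$ it is easy: $J=\begin{pmatrix}0&1\\-1&0\end{pmatrix}$ has $J^k=-I$, so $-E_{11}=J^k+E_{22}^k$. In general:
\begin{enumerate}
\item Take $M\in M_2(\mathbb{Z})$ with characteristic polynomial $x^2-tx+\delta$, $t^2<4\delta$, whose eigenvalue argument $\theta$ satisfies $\cos k\theta<0$. Such $t,\delta$ exist because the values $\arccos(t/2\sqrt{\delta})$ are dense in $(0,\pi)$. For example, with $k=4$, $t=3$, $\delta=4$ one gets $\tr(M^4)=-31$.
\item After replacing $M$ by an integer multiple, $N=M^k=\begin{pmatrix}\alpha&\beta\\ \gamma&\varepsilon\end{pmatrix}$ has trace $-m\le -6$.
\item Add the idempotents $E_{11}-\beta E_{12}$ and $E_{11}-\gamma E_{21}$ to clear the off-diagonal entries.
\item Add the rank-one idempotent $\begin{pmatrix}1+j&-j(1+j)\\1&-j\end{pmatrix}$ with $j=\varepsilon$ to empty the $(2,2)$ slot.
\item Add two idempotents $E_{11}+sE_{12}$ and $E_{11}+s'E_{21}$ to clear the new off-diagonal entries. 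This leaves $(5-m)E_{11}$.
\item Add $m-6$ copies of $E_{11}$ to reach $-E_{11}$.
\end{enumerate}
This transfers to any $R$ via $\mathbb{Z}\to R$ and the corner embedding $M_2\hookrightarrow M_n$.

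\textbf{Two smaller points.} First, you do not need the whole set of sums of $k$-th powers to be closed under negation. The single element $-E_{11}$ suffices, and $-E_{ii}$ then follows by permutation conjugation.

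Second, the diagonal step needs neither commutators nor subtraction. The diagonal entries of $A-B$ sum to exactly $0$, because the trace is additive, so no ``modulo commutators'' caveat arises. Once every $rE_{1i}$ is a sum of $k$-th powers, use the identity
\[
r(E_{ii}-E_{11})=(I+E_{i1})\,(rE_{1i})\,(I-E_{i1})+(-r)E_{1i}+rE_{i1},\qquad i\neq 1 .
\]
Together with $PX^kP^{-1}=(PXP^{-1})^k$, this writes $r(E_{ii}-E_{11})$ as a sum of $k$-th powers with no subtraction. With $-E_{11}$ supplied as above, your outline goes through for every $k$.
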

	
	The one-way implication is obvious as trace is additive. 
	For the converse, suppose $\tr(A)$ is sum of traces of $k$-th powers of matrices in $M_n(R)$. We have to study the structure of an arbitrary trace of the $k$-th power of matrix. For the cases where $n>p$ where $p$ is a prime, theorem $\ref{t1}$ holds, and gives us an idea of such traces. We explore the cases where $n < p$, and try to derive similar structure for such traces and some specific values of $n$ and $p$.

	\begin{thm}\label{strthm}
		Let $A \in M_n(R)$. Let $A = [a_{ij}], 1 \leq i, j \leq n$. Consider $A^p = [a_{ij}^{(p)}]$, for a prime number $p$. Then
		$$a_{ij}^{(p)} = \sum_{k_{1}=1}^n \sum_{k_{2}=1}^n\ldots \sum_{k_{p-1}=1}^n a_{i{k_1}}a_{{k_1}{k_2}}\ldots a_{k_{p-1}j} =  \sum_{1 \leq k_1,k_2,\ldots k_p \leq n} a_{i{k_1}}a_{{k_1}{k_2}}\ldots a_{k_{p-1}j}$$
		Summarizing, for any $n, p \in \mathbb{N}$, $p$ prime, the elements in the trace of $A^p$ are sums of $p$-cycles and $p$-th powers of elements.
		Therefore, if possible, it suffices to prove the `most arbitrary' element in the trace of the $A^p$ is contained in $T_{p,n}$.
	\end{thm}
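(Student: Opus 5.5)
We prove the entry formula by induction on the exponent, using only the definition of matrix multiplication in $M_n(R)$, which makes sense over a non-commutative ring provided the order of the factors is kept fixed. For $m\ge 1$ we claim
$$a_{ij}^{(m)}=\sum_{1\le k_1,\dots,k_{m-1}\le n} a_{ik_1}a_{k_1k_2}\cdots a_{k_{m-1}j}.$$
The case $m=1$ is trivial, and $m=2$ is the definition of the product. For the inductive step we write $A^{m+1}=A^m A$, so that $a^{(m+1)}_{ij}=\sum_{k_m} a^{(m)}_{ik_m}a_{k_mj}$. Substituting the inductive formula and distributing on the right gives the claim for $m+1$. Only right distributivity and associativity are used, so no commutativity is needed. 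Taking $m=p$ gives the displayed formula; the second form of the sum is the same index set written more compactly, with $p-1$ free indices.

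Next we take the trace, $\tr(A^p)=\sum_i a^{(p)}_{ii}$. This is a sum over all closed index walks $i=k_0,k_1,\dots,k_{p-1},k_p=i$ of the words $a_{k_0k_1}a_{k_1k_2}\cdots a_{k_{p-1}k_0}$. We let $\sigma=(1,2,\dots,p)$ act on these closed walks by rotating the starting point: the walk $(k_0,\dots,k_{p-1})$ is sent to $(k_1,\dots,k_{p-1},k_0)$. The corresponding word then changes from $x_1x_2\cdots x_p$ to $x_2\cdots x_px_1$, where $x_t=a_{k_{t-1}k_t}$.

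Because $p$ is prime, every orbit has size $1$ or $p$, as noted before Proposition \ref{p3}. An orbit of size $1$ is a constant walk $k_0=k_1=\cdots=k_{p-1}=i$, and it contributes $a_{ii}^p$. An orbit of size $p$ consists of exactly the $p$ distinct rotations of one walk. The sum of their words is the full sum of cyclic rotations of $(x_1,\dots,x_p)$, which is precisely the cyclic sum $C(x_1,\dots,x_p)$ defined earlier, extended from $5$ to $p$ entries.

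Hence $\tr(A^p)=\sum_i a_{ii}^p+\sum_{\text{orbits of size } p} C(x_1,\dots,x_p)$, which is the summarized assertion that the trace consists of $p$-cycles and $p$-th powers. This argument does not require $n\ge p$, so it also covers the case $n<p$ of interest. The last sentence of the statement is then a reduction via Theorem \ref{t2}: it suffices to show that each such cyclic sum lies in $T_{p,n}$. The main point needing care is the orbit bookkeeping. We must check that distinct walks in one orbit give the distinct rotated words, so that no cyclic sum is over- or under-counted. When the walk is non-constant, its stabilizer in $\langle\sigma\rangle$ is trivial because $p$ is prime, and this settles the count.
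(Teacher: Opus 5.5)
Your proof is correct and follows the paper's route. The entry formula is proved by the same induction on the exponent via $a^{(m+1)}_{ij}=\sum_k a^{(m)}_{ik}a_{kj}$. Your rotation-of-closed-walks argument for the trace decomposition is the orbit--stabilizer argument that the paper gives in Corollary \ref{orbcor} and the corollary after it; you fold it into this proof, and you correctly read the second displayed sum as having $p-1$ free indices.
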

	\begin{proof} We use induction on the power, $m$ for a fixed order $n$. For $m=1$, the claim is clear. 
		
		For $n = m = 2$, 
		$A^2 = \begin{bmatrix}
			a_{11}^2 + a_{12}a_{21} & a_{11}a_{12}+a_{12}a_{22}\\
			a_{21}a_{11}+a_{22}a_{21} & a_{21}a_{22}+a_{22}^2
		\end{bmatrix}
		$ it can be seen that every entry consists of two terms, where the suffixes follow the claimed pattern.
		For $n=2$ and $m = 3$,
		
		$A^3 = [a_{ij}^{(3)}]$ where
		$a_{11}^{(3)} = a_{11}^3+a_{11}a_{12}a_{21}+a_{12}a_{21}a_{11}+a_{12}a_{22}a_{21}$,
		
		$a_{12}^{(3)} = a_{11}^2a_{12}+a_{12}a_{22}^2+a_{11}a_{12}a_{22}+a_{12}a_{21}a_{12},$
		
		$a_{21}^{(3)} = a_{21}a_{11}^2+a_{22}^2a_{21}+a_{21}a_{12}a_{21}+a_{22}a_{21}a_{11},$ 
		
		$a_{22}^{(3)}= a_{22}^3+a_{21}a_{11}a_{12}+a_{21}a_{12}a_{22}+a_{22}a_{21}a_{12}.$
		
		Let the statement be true for $r$. Then $s$-th row will have the elements $a_{s1}^{(r)}, a_{s2}^{(r)}, a_{s3}^{(r)}, \ldots , a_{sn}^{(r)}$ and $t$-th column will have the elements $a_{1t}^{(r)}, a_{2t}^{(r)}, \ldots, a_{nt}^{(r)}$
		Here,
		$a_{s1}^{(r)} = \displaystyle \sum_{k_1, k_2,\ldots, k_{m-1
		}} a_{s{k_1}}a_{{k_1}{k_2}}\ldots a_{k_{r-2}k_{r-1}}a_{k_{r-1}1}$
		and
		
		$a_{3t}^{(r)} = \displaystyle \sum_{k_1, k_2,\ldots, k_{r-1
		}} a_{3{k_1}}a_{{k_1}{k_2}}\ldots a_{k_{r-2}k_{r-1}} a_{k_{r-1}t}$ etc.
		
		Consider the element in $s$-th row and $t$-th column in $A^{r+1}$. We have,
		
		\begin{flushleft}
			$a_{st}^{(r+1)} = a_{s1}^{(r)}.a_{1t} + a_{s2}^{(r)}a_{2t} + \cdots + a_{sn}^{(r)}a_{nt}. $
			
			$= \left(\sum_{k_1, k_2,\ldots, k_{r-1
			}} a_{s{k_1}}a_{{k_1}{k_2}}\ldots a_{k_{r-2}k_{r-1}} a_{k_{r-1}1}\right)a_{1t}
			+ \left(\sum_{k_1, k_2,\ldots, k_{r-1}} a_{s{k_1}}a_{{k_1}{k_2}}\ldots a_{k_{r-2}k_{r-1}} a_{k_{r-1}2}\right)a_{2t}
			+ \cdots + \left(\sum_{k_1, k_2,\ldots, k_{r-1}} a_{s{k_1}}a_{{k_1}{k_2}}\ldots a_{k_{r-2}k_{r-1}} a_{k_{r-1}n}\right)a_{nt}$ \\
			
			$=\left(\sum_{k_1, k_2,\ldots, k_{r-1
			}} a_{s{k_1}}a_{{k_1}{k_2}}\ldots a_{k_{r-1}1}a_{1t}\right)	+ $\\
			
			$\left(\sum_{k_1, k_2,\ldots, k_{r-1
			}} a_{s{k_1}}a_{{k_1}{k_2}}\ldots a_{k_{r-1}2}a_{2t}\right) + \cdots + \left(\sum_{k_1, k_2,\ldots, k_{r-1
			}} a_{s{k_1}}a_{{k_1}{k_2}}\ldots a_{k_{r-1}n}a_{nt}\right)$\\
			
		\end{flushleft}
		There are $r+1$ terms in each sum, and therefore the result can be summarized as
		$$
		a_{st}^{(r+1)} = \sum_{k_1, k_2,\ldots, k_{r
		}} a_{s{k_1}}a_{{k_1}{k_2}}\ldots a_{{k_{r-1}}{k_r}} a_{k_{r}t}, 1 \leq k_i \leq n.	
		$$
		By induction, the result holds for all $m \in \mathbb{N}$.
	\end{proof}
	
		By `most arbitrary' choice of cycle, we mean the cycles that have the least number of entries repeated, or alternatively, a $p$-cycle such that any other $p$-cycle can be obtained by substituting appropriate variables in these cycles. For example, for $n = 2$ and $p = 5$, the `most arbitrary' cycles are $C(a,a,b,c,d), C(a,b,a,c,d)$, since $n^2 < p$. If $n^2 > p$ then the most arbitrary cycle would be the one consisting of all different variables, $C(a_1, a_2, \ldots a_p)$. If one proves that the most arbitrary cycles are contained in the $T_{n,p}$ under discussion, then $p$-th power of any matrix can be expressed as sums of $p$-th powers of matrices, by Theorem \ref{t2}.

	\begin{cor}\label{orbcor}
		Consider the group action of the cyclic group generated by cycle of length $p$, where $p$ is the power of the matrix, on any term appearing in the trace of the $p$-th power of a matrix. Then the trace will contain the entire `orbit' of this element under the said group action. 
	\end{cor}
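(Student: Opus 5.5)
We plan to read off the trace of $A^p$ from the entry formula of Theorem \ref{strthm}. Taking $i=j$ there and summing over $i$ gives
$$\tr(A^p)=\sum_{1\le k_0,k_1,\ldots,k_{p-1}\le n} a_{k_0k_1}a_{k_1k_2}\cdots a_{k_{p-2}k_{p-1}}a_{k_{p-1}k_0}.$$
So the terms of the trace are indexed by closed walks of length $p$ on the vertex set $\{1,\ldots,n\}$. Such a walk is a $p$-tuple $w=(k_0,k_1,\ldots,k_{p-1})$ read cyclically, and it contributes the word $x_w=a_{k_0k_1}a_{k_1k_2}\cdots a_{k_{p-1}k_0}$. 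The sum runs over \emph{all} $p$-tuples, and each tuple occurs exactly once.

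Next we identify the group action. Let $\sigma$ act on index tuples by the cyclic shift $\sigma(k_0,\ldots,k_{p-1})=(k_1,\ldots,k_{p-1},k_0)$. Then $x_{\sigma w}=a_{k_1k_2}\cdots a_{k_{p-1}k_0}a_{k_0k_1}$. This is exactly the cyclic rotation of the factors of $x_w$ that the paper's $\sigma$ performs on a product $a_1a_2\cdots a_p$. Thus rotating a term of the trace corresponds to shifting its index tuple. Because the index set of the sum is all of $\{1,\ldots,n\}^p$, and this set is closed under $\sigma$, every rotation $\sigma^{j}(x_w)=x_{\sigma^{j}w}$ again appears in $\tr(A^p)$. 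This is precisely the claim that the entire orbit of any term lies in the trace.

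To turn this into the usable form, we partition $\{1,\ldots,n\}^p$ into $\langle\sigma\rangle$-orbits. By the orbit–stabilizer discussion preceding Proposition \ref{p3}, and since $p$ is prime, each orbit has size $1$ or $p$. An orbit of size $1$ is a constant tuple $(k,\ldots,k)$, which contributes $a_{kk}^p$. An orbit of size $p$ consists of $p$ distinct tuples whose words are the $p$ rotations of one word. Their sum is therefore the cyclic sum $C(a_{k_0k_1},\ldots,a_{k_{p-1}k_0})$, and each tuple is counted once. Consequently
$$\tr(A^p)=\sum_{k=1}^n a_{kk}^p+\sum_{\text{orbits of size }p} C(\cdots),$$
which matches the summary in Theorem \ref{strthm}. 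The same decomposition holds without the hypothesis $n\ge p$ of Proposition \ref{p3}.

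The main obstacle is the distinction between index tuples and ring elements. Two distinct tuples in one orbit may produce equal words, since different entries $a_{ij}$ may coincide as elements of $R$. So the statement has to be made, and proved, at the level of index tuples or formal words, not at the level of evaluated elements. With that convention, the orbit of a tuple has exactly $p$ members whenever the tuple is nonconstant, and each member appears once in the trace. The conclusion then holds regardless of coincidences among the values in $R$.
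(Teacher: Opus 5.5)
Your proof is correct and follows the paper's argument. Both read the trace off Theorem \ref{strthm} as a sum over closed index walks $a_{k_0k_1}a_{k_1k_2}\cdots a_{k_{p-1}k_0}$, and both observe that a cyclic rotation of the factors is again such a walk: it starts at $k_1$, so it is a term of the $(k_1,k_1)$ entry, and hence the whole orbit lies in the trace. Your two extra steps are sound refinements that the paper leaves implicit: the partition into orbits of size $1$ or $p$, and the insistence on working with index tuples rather than evaluated ring elements (the latter is what gives meaning to the paper's follow-up claim that size-$p$ orbits produce $p$ distinct terms).
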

	\begin{proof}
		From Theorem \ref{strthm}, every summand in the trace of the $p$-th power of the matrix will have the form $a_{ii}^{(p)} = \sum_{k_1, k_2,\ldots, k_{p-1
		}} a_{i{k_1}}a_{{k_1}{k_2}}\ldots a_{k_{p-2}k_{p-1}}a_{k_{p-1}i}$. Note that the first suffix of the first element and the last suffix of the last element in each term will be the same. It is easy to see that when the cyclic permutation $(1,2,3,\ldots, p)$ acts on a term from the trace, the resulting term is also present in the trace. 
		Suppose $	a_{jj}^{(r+1)} = \sum_{k_1, k_2,\ldots, k_{p
		}} a_{j{k_1}}a_{{k_1}{k_2}}\ldots a_{{k_{p-1}}j}$ is such a term. When the permutation $(1,2,3,\ldots, p)$ acts on it, we get $a_{{k_1}{k_2}}\ldots a_{{k_{p-1}}j}a_{j{k_1}}$ which is an element from $k_1$th row and $k_1$th column, hence present in the trace. Similarly, if we let the cyclic permutation act on the element $a_{{k_1}{k_2}}\ldots a_{{k_{p-1}}j}a_{j{k_1}}$, the image element $a_{{k_2}{k_3}}a_{{k_3}{k_4}}\ldots a_{{k_{p-1}}j}a_{j{k_1}}a_{{k_1}{k_2}}$ will appear in the trace. Following in this fashion, the $p$-th such element will be the original element and thus the entire orbit will be present in the trace.
	\end{proof}
	
	\begin{cor}
		If a matrix of order $n$ is raised to the power $p$ where $p$ is a prime, then the `cyclic sum' corresponding to an element in the trace will be present in the trace, and there will be either $p$ or $1$ terms in the cyclic sum. If the power is a composite number then the orbit of an element in the trace will be divisor of $r$.
	\end{cor}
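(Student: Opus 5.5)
The plan is to reduce everything to the description of the diagonal entries of $A^p$ given in Theorem \ref{strthm}, and then to use the orbit argument of Corollary \ref{orbcor} together with the orbit–stabilizer remark made before Proposition \ref{p3}.

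First, by Theorem \ref{strthm}, $\tr(A^p)=\sum_{i}a_{ii}^{(p)}$ is the sum over all closed index walks $(i,k_1,\dots,k_{p-1},i)$ of the word $w=a_{ik_1}a_{k_1k_2}\cdots a_{k_{p-1}i}$. I will identify each term with its tuple of factors $(a_{ik_1},a_{k_1k_2},\dots,a_{k_{p-1}i})\in R^p$, or more precisely with the closed index sequence itself. Distinct closed walks give distinct summands, even if ring elements happen to coincide, so the orbit is counted on index sequences.

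Second, by Corollary \ref{orbcor}, the cyclic shift $\sigma$ sends the term from the walk starting at $i$ to the term from the rotated walk starting at $k_1$. This rotated walk is again a summand of the trace. Hence the set of closed walks of length $p$ is a union of $\langle\sigma\rangle$-orbits, and summing $w$ over the orbit of its walk gives exactly the cyclic sum $C$ of its factors when the orbit is full.

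Third, for $p$ prime the group $\langle\sigma\rangle$ has order $p$, so by orbit–stabilizer each orbit has size $1$ or $p$.
\begin{itemize}
\item Size $1$ means the walk is fixed by $\sigma$, so $i=k_1=\cdots=k_{p-1}$. The term is then $a_{ii}^p$, a single $p$-th power.
\item Otherwise at least two indices differ, the orbit has exactly $p$ elements, and the orbit sum is $C(a_{ik_1},\dots,a_{k_{p-1}i})$ with $p$ terms.
\end{itemize}
Thus $\tr(A^p)=\sum_i a_{ii}^p+\sum C(\cdots)$, where the second sum runs over one representative per full orbit. This recovers the analogue of Proposition \ref{p3} with no restriction $n\ge p$.

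For composite exponent $r$, the same action of $\langle\sigma\rangle\cong\mathbb{Z}/r$ on closed walks of length $r$ gives orbits whose sizes divide $r$ by orbit–stabilizer. Here a walk with period $d\mid r$ has orbit size $d$.

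The only delicate point is the counting in the second step. One must be careful to act on index sequences rather than on ring values; otherwise coincidences like $a_{12}=a_{21}$ would seem to shrink orbits. Once this is set up, the argument is essentially bookkeeping.
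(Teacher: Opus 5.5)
Your proposal is correct and follows essentially the same route as the paper. The paper also combines Corollary~\ref{orbcor}, which says the $\langle\sigma\rangle$-orbit of a trace term lies in the trace, with orbit--stabilizer. This gives orbit sizes $1$ or $p$, with the fixed terms being $a_{jj}^p$, and in the composite case orbit sizes $d \mid r$ with stabilizer of order $r/d$. Your choice to let $\sigma$ act on closed index walks rather than on ring values is a sharpening that the paper leaves implicit, and it is exactly what makes the identification of size-$1$ orbits with the terms $a_{jj}^p$ rigorous.
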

	\begin{proof}
		From the Corollary \ref{orbcor} it is established that the cyclic group of order $m$ generated by $(1,2,3,\ldots, m)$ will act on a term in the trace and the corresponding orbit will lie in the trace. 
		
		By orbit-stabilizer theorem, the size of the orbit will divide the order of group, and the corollary follows. For $m = p$, where $p$ is prime, the elements with orbit size $1$ must be the elements of the type $a_{jj}^p$, for all $j$, $1 \leq j \leq n$ which are $p$-th powers of the diagonal entries. The elements with orbit size $p$ must therefore produce all distinct elements. For $m = r$ where $r$ is composite, the orbit of size $d$ will be stabilized by a subgroup of the order $\frac{r}{d}$.
	\end{proof}
	
	We now use the method by Katre, [\cite{KG}, Theorem 3.5] to show that the matrices that are sums of $p$-th powers over $M_n(R)$ can be expressed as sums of $p$-th powers over $M_{n+1}(R)$ as well.
	\begin{lem}\label{subgrouplem}
		Let $p$ be a prime. The set of elements of $R$ that can be expressed as sums of traces of $p$-th powers over $M_n(R)$, $T_{p,n}$ is an additive subgroup $H \subseteq R$. Also, $T_{p,n} \subset T_{p,n+1}$ for all $n \in \mathbb{N}$.
	\end{lem}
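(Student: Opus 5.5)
Both assertions follow directly from how $T_{p,n}$ is defined: it is the set of finite sums $\sum_i \tr(B_i^p)$ with $B_i \in M_n(R)$.

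\emph{Subgroup property.} The set $T_{p,n}$ is nonempty, since $0 = \tr(0^p)$. It is closed under addition by construction, because concatenating two finite sums gives another finite sum. The only point that needs an argument is closure under negatives. For this we use that $p$ is prime and treat two cases.

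If $p$ is odd, then $(-B)^p = -B^p$, since the scalar $-1$ is central in $M_n(R)$. Hence $-\tr(B^p) = \tr((-B)^p)$ lies in $T_{p,n}$, and negatives of finite sums of such traces lie there as well.

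If $p = 2$, this trick fails. Instead we write $-1 = 1 - 2$ and use that $2r \in T_{2,n}$ for every $r \in R$. To see this, let $a \in R$ and set $B = I + aE_{11}$. Then
$$\tr(B^2) - \tr(I^2) - \tr\bigl((aE_{11})^2\bigr) = 2a.$$
This displays $2a$ as an integer combination of traces of squares, with the wrong sign on two of the terms. To remove the negative signs we note that $-\tr(C^2) = \tr(C^2) - 2\tr(C^2)$, which reduces the problem to showing that $-2x$ is a sum of traces of squares. A cleaner route is to cite Theorem \ref{t1}(vii) (Vaserstein) or the explicit description of $T_2$ in \cite{KK}, which is a group.

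This case $p = 2$ is the only real obstacle. If needed, I would argue it through the characterization of $T_p$ in Theorem \ref{t1}(iv): that description consists of commutators, $p$-th powers and $pR$, and is visibly closed under negation modulo $pR$. However, Theorem \ref{t1} is stated only for $n \ge p$, so for the case $n < p$ it is the odd-prime argument above that carries the proof.

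\emph{Inclusion $T_{p,n} \subset T_{p,n+1}$.} Following the method of \cite{KG}, embed $B \in M_n(R)$ into $M_{n+1}(R)$ as the block-diagonal matrix
$$\tilde B = \begin{bmatrix} B & 0 \\ 0 & 0 \end{bmatrix}.$$
Block multiplication gives
$$\tilde B^p = \begin{bmatrix} B^p & 0 \\ 0 & 0 \end{bmatrix},$$
so $\tr(\tilde B^p) = \tr(B^p)$. Applying this to each summand, every element $\sum_i \tr(B_i^p)$ of $T_{p,n}$ equals $\sum_i \tr(\tilde B_i^p)$, which lies in $T_{p,n+1}$. This gives the inclusion.
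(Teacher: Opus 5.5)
Your argument for odd $p$ is exactly the paper's: $(-B)^p=-B^p$ gives $-\tr(B^p)=\tr((-B)^p)$. Your block-diagonal embedding for $T_{p,n}\subset T_{p,n+1}$ is also the paper's zero-row/zero-column augmentation. The paper's proof silently assumes $p$ odd too, which is all it needs later ($p=5,7$).

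The gap is your treatment of $p=2$. The reduction is circular. To place $-\tr(C^2)=\tr(C^2)-2\tr(C^2)$ in $T_{2,n}$ you need $2x\in T_{2,n}$ for $x=-\tr(C^2)$. But your identity $2a=\tr(B^2)-\tr(I^2)-\tr\bigl((aE_{11})^2\bigr)$ itself uses negatives, so nothing is gained. Your closing sentence, that for $n<p$ ``the odd-prime argument carries the proof'', is a non sequitur. For $p=2$ the case $n<p$ is $n=1$, and the odd-prime argument does not apply there.

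In fact the case $p=2$, $n=1$ cannot be repaired, because the claim is false there. $T_{2,1}$ is the set of sums of squares in $R$. For the non-commutative ring $R=\mathbb{R}\times\mathbb{H}$, every square $(x,q)^2=(x^2,q^2)$ has nonnegative first coordinate. Hence $1\in T_{2,1}$ but $-1\notin T_{2,1}$.

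For $p=2$ and $n\ge 2$ there is an elementary fix that avoids Theorem \ref{t1}. Take the $n\times n$ matrix with $(1,2)$-entry $r$, $(2,1)$-entry $1$ and zeros elsewhere. It squares to $\operatorname{diag}(r,r,0,\dots,0)$, so $2r\in T_{2,n}$ for every $r\in R$. Then $-\tr(C^2)=\tr(C^2)+2\bigl(-\tr(C^2)\bigr)\in T_{2,n}$.

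So the correct statement is: $T_{p,n}$ is a subgroup for odd $p$ and all $n$, and for $p=2$ when $n\ge 2$. You should either restrict to odd $p$, as the paper implicitly does, or use this fix and exclude $n=1$.
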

	\begin{proof}
		As $\tr O^p  = 0$, clearly $0 \in T_{p,n}$. Let $A_i, B_j \in M_n(R)$. If $a = \displaystyle \sum_{i=1}^{s} \tr (A_i^p)$ and $b = \displaystyle \sum_{i=1}^{t} \tr (B_i^m)$ then $a - b = \displaystyle \sum_{i=1}^{s} A_i^p + \displaystyle \sum_{i=s+1}^{s+t} (-B_{i})^{m}.$ 
		
		Note that $a = \displaystyle \sum_{i=k}^{s} \tr (A_k^p)$. Let $A_{k} = [a_{kij}]$. 
		Consider 
		${A_i^\prime} = [a^\prime_{kij}], 1\leq i, j \leq n+1 $, where $a^\prime_{kij} = a_{kij}$ for $1\leq i, j \leq n$ and $0$ otherwise. We use the same technique as \cite{KK}; to augment $A_k$ by a zero row and zero column to yield	$a = \displaystyle \sum_{k=1}^{s} \tr ({A_k^\prime}^p)$. Thus $T_{p,n} \subset T_{p,n+1}$. 
	\end{proof}
	\begin{cor}\label{coprimecor} Let $R$ be a non-commutative ring with unity and $a \in R$. If $ua \in T_{p,n}$ and $va \in T_{p,n}$ where $u, v$ are coprime integers, then $a \in T_{p,n}$.
	\end{cor}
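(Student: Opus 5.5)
The plan is to combine Bézout's identity with Lemma \ref{subgrouplem}, which says that $T_{p,n}$ is an additive subgroup of $R$.

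First, since $u$ and $v$ are coprime integers, Bézout's identity gives integers $x, y \in \mathbb{Z}$ with $xu + yv = 1$.

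Second, we use that $T_{p,n}$ is an additive subgroup $H \subseteq R$. In particular, $T_{p,n}$ is closed under addition and under additive inverses. Hence for any $t \in T_{p,n}$ and any integer $m$, the element $mt$ is a $\mathbb{Z}$-multiple of $t$ and therefore lies in $T_{p,n}$. For $m>0$ this is a sum of $m$ copies of $t$. For $m<0$ we use closure under negation; concretely, $-\tr(A^p) = \tr((-A)^p)$ because $p$ is odd. This argument handles the primes $p = 5, 7$ of interest, and for $p=2$ we simply invoke the subgroup property directly. For $m=0$ we have $0 \in T_{p,n}$. Applying this with $t = ua$ and $t = va$ shows that $x(ua) \in T_{p,n}$ and $y(va) \in T_{p,n}$.

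Third, integer multiples commute with ring elements, so $x(ua) = (xu)a$ and $y(va) = (yv)a$. Adding the two and using closure under addition gives
$$a = (xu+yv)a = x(ua) + y(va) \in T_{p,n}.$$

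There is no real obstacle here. The only point needing care is closure under negatives, and that is exactly what Lemma \ref{subgrouplem} provides. Non-commutativity of $R$ plays no role, because only $\mathbb{Z}$-scalars are involved, and these are central.
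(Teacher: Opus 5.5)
Your proof is correct and matches the paper's: Bézout's identity $xu+yv=1$, together with closure of $T_{p,n}$ under integer multiples, gives $a = x(ua)+y(va)\in T_{p,n}$. Your justification of negatives via $\tr((-A)^p)=-\tr(A^p)$ for odd $p$ is the right one, and it is exactly what the proof of Lemma~\ref{subgrouplem} relies on; however, you should drop the claim that for $p=2$ one can ``invoke the subgroup property directly'', because for $p=2$, $n=1$ both that property and the corollary itself fail (take $R=\mathbb{Z}\times M_2(\mathbb{Z})$, $a=(-1,0)$, $u=-2$, $v=-3$), which is harmless here since only $p=5,7$ are used.
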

	\begin{proof}
		If $u, v$ are coprime then there are $x, y \in \mathbb{Z}$ such that $ux + vy = 1.$ Thus $uxa + vya = a$. Since $ua \in T_{p,n}$ it is clear that $xua \in T_{p,n}$ by repetitive addition and subtraction for all $x \in \mathbb{Z}$. Similarly, $vy \in T_{p,n}$, hence we get $a \in T_{p,n}$.\end{proof}
	
	\section{The prime $p = 5$.}
	In this section, we try to find conditions for matrices over non-commutative rings to be sums of fifth powers of matrices. For the rings $M_3(R)$ and $M_4(R)$, we use the fact that the most arbitrary five-cycle can be obtained as sums of fifth powers. For the case of $M_2(R)$, results for matrices over commutative rings obtained by Garge \cite{G} are used to obtain the results, noting that the elements used in the proof commute even in any non-commutative ring with unity.
	
	\subsection{Order of matrices, $n=2$ and prime power, $p = 5$.}
	First we classify the most general five-cycles which can be made using $4$ elements, upto rearrangement of choices of the four elements coming from a  $2 \times 2$ matrix.
	
	\begin{rem}\label{r52}
		Observing trace of a fifth power of a matrix over $M_2(R)$ helps us to note some observations. 
		
		If $A = \begin{bmatrix}
			x & y\\
			z & w\\
		\end{bmatrix}$, then $\tr (A^5) = x  y z y z + y  w z y z+ x^3 y z + y  z  x y z  + x  y  w^2 z + y  w^3 z + x^2 y w z + y  z  y w z  
		+  x  y  z x^2 + y  w  z x^2 + x^5 + y  z  x^3   
		+ x  y  w z x + y  w^2 z x + x^2  y z x + y  z  y z x + w  z  y  z  y + z  x  y  z  y + w^3 z y + z  y  w  z  y  
		+ w  z x^2 y + z  x^3 y + w^2 z x y + z  y  z x y + w  z  y  w^2  + z  x  y  w^2 + w^5 + z  y  w^3  
		+ w  z  x  y  w + z  x^2  y  w + w^2 z y w + z  y  z y w =  x^5 + w^5 + 
		C(x,y,z,y,z) + 
		C(w,z,y,z,y)+
		C(x,x,x,z,y) +  
		C(x,x,y,w,z) +
		C(x,y,w,w,z) + 
		C(y,w,w,w,z)$.
		
		It is clear by the pigeonhole principle that, only two main types of cyclic sums over five symbols exist:
		\begin{enumerate}
			\item[(A)] $C(a,b,c,d,a) = abcda + bcda^2 + cda^2b + da^2bc + a^2bcd.$
			\item[(B)] $C(a,b,a,c,d) = abacd + bacda + acdab + cdaba + dabac.$
		\end{enumerate}
		Since at a later point, we require entries in the form of $a_{ij}$ where $i, j \in \{1, 2\}$, and $i \neq j$, we can rewrite:
		\begin{enumerate}
			\item 	$C(a_{12},a_{21},a_{12},a_{22},a_{21})$ and $C(a_{21},a_{12},a_{21},a_{11},a_{12})$
			\item   $C(a_{12},a_{21},a_{12},a_{21},a_{11})$ and $C(a_{21},a_{12},a_{21},a_{12},a_{22})$
		\end{enumerate}
		Since five-cycles with at least one repeated element were observed in the calculations, we define a set $S_{5,2} \subset R^5$ as the set of all the five-cycles of the types described here. We note that any other five-cycle with four symbols will definitely belong to one of the types listed here.
	\end{rem}
		
	Now we state a lemma that is useful to combine various fifth powers into a fifth power and sums of few cyclic sums.
	\begin{lem}\label{lbi5}
		Let $R$ ring be a non-commutative ring with unity. Sums of fifth powers of elements can be represented as sums of certain cyclic sums and a fifth power.
	\end{lem}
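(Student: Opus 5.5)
The plan is to use the noncommutative multinomial expansion of a fifth power of a sum, and to induct on the number of summands. Concretely, the claim is that for $c_1,\dots,c_l\in R$,
$$c_1^5+c_2^5+\cdots+c_l^5=(c_1+c_2+\cdots+c_l)^5-\sum C(a_1,a_2,a_3,a_4,a_5),$$
where the sum runs over representatives of the cyclic orbits of $5$-tuples with entries in $\{c_1,\dots,c_l\}$ having at least two unequal entries.

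The main case is $l=2$, and it is the step where care is needed. Expand $(a+b)^5$ as the sum of all $32$ words of length five in the letters $a,b$. The two constant words $a^5$ and $b^5$ are fixed by the cyclic shift $\sigma=(12345)$. As noted before Proposition \ref{p3}, because $5$ is prime every non-constant word has an orbit of exactly $5$ elements under $\sigma$. The remaining $30$ words therefore split into $6$ orbits, and summing over each orbit gives exactly one cyclic sum $C(\cdot)$. The representatives are $aaaab$, $aaabb$, $aabab$, $abbbb$, $abbab$ and $aabbb$ (or their suitable rotations). One checks that these $6$ orbits are pairwise distinct and exhaust the $30$ words: with one $b$ there is $1$ orbit, with two $b$'s there are $2$, with three $b$'s there are $2$, and with four $b$'s there is $1$. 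This gives
$$a^5+b^5=(a+b)^5-\bigl[C(a,a,a,a,b)+C(a,a,a,b,b)+C(a,a,b,a,b)+C(a,b,b,b,b)+C(a,b,b,a,b)+C(a,a,b,b,b)\bigr].$$
The only real obstacle is bookkeeping. In a non-commutative ring we cannot collect terms into binomial coefficients, so we must make sure that each word belongs to exactly one orbit, and we must confirm, as Corollary \ref{orbcor} guarantees, that each orbit sum is precisely the cyclic sum $C$ as defined, rather than a multiple of it.

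For general $l$ we induct, using the identity $\sum_{j=1}^l c_j^5=\sum_{j=1}^{l-2}c_j^5+(c_{l-1}^5+c_l^5)$. Applying the $l=2$ case replaces the last two fifth powers by $(c_{l-1}+c_l)^5$ minus cyclic sums, which reduces the number of fifth powers by one. Alternatively, we can expand $(c_1+\cdots+c_l)^5$ directly in the same way: constant words give the $c_j^5$, and every other word lies in an orbit of size $5$ whose sum is a cyclic sum in the $c_j$. Either route ends with a single fifth power together with finitely many cyclic sums, which is the form appearing in Theorem \ref{t1}(iii).

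Finally, I would remark that the signs are harmless in the intended application. Negatives of cyclic sums are again cyclic sums, since $-C(a_1,\dots,a_5)=C(-a_1,a_2,\dots,a_5)$, and $T_{p,n}$ is an additive subgroup by Lemma \ref{subgrouplem}. So the lemma can be stated as: any sum of fifth powers equals one fifth power plus a sum of cyclic sums.
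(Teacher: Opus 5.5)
Your proof is correct and follows the paper's argument. You expand $(a+b)^5$ into $a^5$, $b^5$ and six orbits of size five (your $C(a,b,b,a,b)$ is a rotation of the paper's $C(a,b,a,b,b)$), induct by merging two fifth powers at a time, and absorb the sign into one argument of each cyclic sum as the paper does with $C(\ldots,-b)$. One small correction for your general-$l$ formulation: the orbits should be taken over index words in $\{1,\dots,l\}^5$ rather than over tuples of ring elements, since the $c_j$ need not be distinct.
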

	\begin{proof}
		We first prove the claim for the fifth power of sum of two elements. Let $a,b \in R$.
		
		Then $(a+b)^5 = a^5 + a^4b + a^3ba + a^3b^2 + a^2ba^2 + a^2bab + a^2b^2a + a^2b^3 + aba^3 + aba^2b + ababa + abab^2 + ab^2a^2 + ab^2ab + ab^3a + ab^4 + ba^4 + ba^3b + ba^2ba + ba^2b^2 + baba^2 + babab + bab^2a + bab^3 + b^2a^3 + b^2a^2b + b^2aba + b^2ab^2 + b^3 a^2 + b^3ab + b^4a + b^5.$
		
		Rearranging so that the cyclic sums are together:
		We have, $(a+b)^5  = a^5 + b^5 + (a^4b + a^3ba + a^2ba^2 + aba^3 + ba^4)
		+ (a^3b^2 + a^2b^2a + ab^2a^2 + b^2a^3 + ba^3b)
		+ (a^2bab + ababa + baba^2 + aba^2b + ba^2ba)
		+ (a^2b^3 + ab^3a + b^3a^2 + b^2a^2b + ba^2b^2)
		+ (abab^2 + bab^2a + ab^2ab + b^2aba +  babab)
		+ (ab^4 +  b^3ab + + b^2ab^2 + bab^3 + b^4a) $
		
		$= a^5 + C(a,a,a,a,b) + C(a,a,a,b,b) + C(a,a,b,a,b)$ $+ C(a,a,b,b,b) + C(a,b,a,b,b) + C(a,b,b,b,b) + b^5$.	
				
		Thus $a^5 + b^5 = (a+b)^5 + C$ where $C = C(a,a,a,a,-b) + C(a,a,a,b,-b) + C(a,a,b,a,-b)$ $+ C(a,a,b,b,-b) + C(a,b,a,b,-b) + C(a,b,b,b,-b)$. We proceed by induction on $n$. Consider $\displaystyle \sum_{i=1}^{k}a_i^5$ where $a_i \in R,$ for all $1 \leq i \leq n$.  Let the proposition hold for $n = k$, i.e. $\displaystyle \sum_{i=1}^{k}a_i^5 = C + b^5$ where $C$ is sum of some cyclic sums and $b = \displaystyle \sum_{i=1}^5 a_i$.
		
		Consider $\displaystyle \sum_{i=1}^{k+1} a_i^5 = \sum_{i=1}^{k}a_i^5 + a_{k+1}^5 = C + b^5 + a_{k+1}^5 = C + (b+a_{k+1})^5 + C^\prime $
		
		where $C^\prime = C(a_{k+1},a_{k+1},a_{k+1},a_{k+1},-b) + C(a_{k+1},a_{k+1},a_{k+1},b,-b) + C(a_{k+1},a_{k+1},b,a_{k+1},-b) + C(a_{k+1},a_{k+1},b,b,-b) + C(a_{k+1},b,a_{k+1},b,-b) + C(a_{k+1},b,b,b,-b)$
		where $C$ and $C^\prime$ are some cyclic sums, and result follows by induction. It should be noted that all the $5$-cycles here are special cases of the types (A) or (B) described above.
	\end{proof}
	
	Now we state an important result by Garge \cite{G} which uses only the matrices with entries $\pm 1$, $r + 1$, $r$, $r^k$, for some $k \in \mathbb{N}$,  which commute with each other for any element $r \in R$. Although they have discovered this for commutative ring, we can conclude due to the nature of entries which are polynomials in the ring $\mathbb{Z}[r]$, the same holds over non-commutative rings as well.
	
	\begin{lem}[\cite{G}, Lemma 2.5]\label{pg5r}
		Let $R$ be a ring, not necessarily commutative; with units, and $r \in R$. Any element of the form $5r$ can be written as a sum of traces of fifth powers over $M_2(R)$.
	\end{lem}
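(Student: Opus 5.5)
The key observation is that every matrix I will use has entries in the subring $\mathbb{Z}[r]\subseteq R$ generated by $1$ and $r$. This subring is commutative, since all its elements are integer polynomials in $r$. The entries of $A^5$ and the value of $\tr(A^5)$ are computed by the formula of Theorem~\ref{strthm}, and every word appearing there involves only such entries. So each such trace is the image of a polynomial identity in the commutative ring $\mathbb{Z}[t]$ under the evaluation map $t\mapsto r$, and Garge's commutative computation transfers verbatim. My plan is therefore to exhibit explicit matrices $A_i\in M_2(\mathbb{Z}[t])$ and signs $\epsilon_i=\pm1$ with $\sum_i \epsilon_i\tr(A_i^5)=5t$ in $\mathbb{Z}[t]$, and then specialise $t=r$. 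The signs cause no trouble: $-\tr(A^5)=\tr((-A)^5)$, and $T_{5,2}$ is an additive group by Lemma~\ref{subgrouplem}.

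The concrete steps are as follows. First, I record the trace formula for a $2\times 2$ matrix over a commutative ring. If $A$ has trace $s$ and determinant $d$, Newton's identities give
\[
\tr(A^5)=s^5-5s^3d+5sd^2 .
\]
Second, I take $A=\begin{bmatrix} t & 1\\ 1 & 0\end{bmatrix}$, so $s=t$ and $d=-1$. This gives $\tr(A^5)=t^5+5t^3+5t$.

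Third, I subtract $\tr\begin{bmatrix} t&0\\0&0\end{bmatrix}^5=t^5$, which shows that $5t^3+5t\in T_{5,2}$. It then remains to remove the $5t^3$ term. I would do this with the analogous matrices built from $t\pm1$, $t^k$ and $\pm1$; the entries used in the hypothesis preamble are exactly $\pm1, r, r+1, r^k$.

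Concretely, I would use the matrices $\begin{bmatrix} t+\epsilon & 1\\ 1& 0\end{bmatrix}$ and $\begin{bmatrix} t & 1\\ -1 & 0\end{bmatrix}$ (the latter with $d=1$, giving $t^5-5t^3+5t$). Adding $\tr$ of the second to that of the first and subtracting $2t^5$ yields $10t\in T_{5,2}$. Combining the first with $\begin{bmatrix}t&t\\1&0\end{bmatrix}$ or similar variants is meant to produce an odd multiple of $5t$. Corollary~\ref{coprimecor}, applied with $a=5t$ and coprime multipliers $2$ and an odd $u$, then gives $5t\in T_{5,2}$. For instance, the first matrix shows $5t^3+5t\in T_{5,2}$, and if $5t^3$ (or $5t^3$ modulo $10t$-type terms) is separately obtained from a matrix with entries $t^k$, we are done.

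The main obstacle is this last cancellation: isolating $5t$ from the mixed expressions $5t^3\pm5t$, $5t^2$ and the like. I expect to handle it by systematically listing traces for $A=\begin{bmatrix}a&1\\ \pm1&0\end{bmatrix}$ with $a\in\{t,t+1,-t\}$ and $A=\begin{bmatrix}a&b\\1&0\end{bmatrix}$ with $b\in\{1,t\}$, and solving the resulting small integer linear system in the monomials $t^5, t^3, t^2, t$. If a direct combination fails, the fallback is to show $5t^3\in T_{5,2}$ for all $t$ by substitution and induction on degree, exactly as in Garge's Lemma 2.5. Finally, I would confirm that no step used commutativity beyond the subring $\mathbb{Z}[r]$.
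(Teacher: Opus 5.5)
Your first paragraph is exactly the paper's argument. The paper gives no computation for this lemma: it cites Garge's Lemma 2.5 and notes that his matrices have entries $\pm1, r, r+1, r^k$, all in the commutative subring $\mathbb{Z}[r]$. The gap is in the part you try to supply yourself. You correctly get $5t^3+5t\in T_{5,2}$ and, taking $\epsilon=0$, $10t\in T_{5,2}$. But the decisive step, producing $5t$ or an odd multiple of it, is only announced, and the search you propose cannot succeed:

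\begin{itemize}
\item Every matrix you list ($\begin{bmatrix} a&1\\ \pm1&0\end{bmatrix}$, $\begin{bmatrix} a&b\\ 1&0\end{bmatrix}$, $\operatorname{diag}(c,0)$, $\begin{bmatrix} t&t\\ 1&0\end{bmatrix}$) has $s=\tr A$ and $d=\det A$ of degree at most $1$ in $t$.
\item For such a matrix, $\tr(A^5)=s^5+5g(s,d)$ with $g(s,d)=sd^2-s^3d$ of degree at most $4$.
\item Suppose $\sum\pm\tr(A_i^5)=5ut$. Then $P=\sum\pm s_i^5$ equals $5(ut-\sum\pm g_i)$, so $P$ has no $t^5$ term.
\item Write $s_i=\alpha_it+\beta_i$. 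The $t^2$ and $t^3$ coefficients of $P/5$ are even, and its $t$ and $t^4$ coefficients are both $\equiv\sum\alpha_i\beta_i \pmod 2$. Hence $P(1)/5\equiv P(0)/5 \pmod 2$.
\item However, $g$ takes only even values at integers, since $xy(y-x^2)$ is always even. So for odd $u$, $P(1)/5=u-\sum\pm g_i(1)$ is odd while $P(0)/5$ is even, a contradiction.
\end{itemize}

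Your fallback does not help either. Since $5t^3+5t\in T_{5,2}$ for every $t$, proving $5t^3\in T_{5,2}$ for all $t$ merely restates the goal.

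The missing ingredient is an entry of higher degree, such as $r^3$; this fits the paper's remark that Garge's matrices involve $r^k$. Since $\tr\begin{bmatrix} s&-d\\ 1&0\end{bmatrix}^5=s^5+5g(s,d)$, you have $5g(s,d)\in T_{5,2}$ for all $s,d\in\mathbb{Z}[r]$. Direct expansion gives the identity in $\mathbb{Z}[t]$
\[
5t=-5\bigl[g(t+1,t^3)+g(t,t^3)+g(1,t^3)+g(t+1,t)\bigr]-5t^5+10\,(t^7-t^5-2t^4-2t^3-t^2).
\]
After the substitution $t\mapsto r$, every summand lies in $T_{5,2}$:
\begin{itemize}
\item the $g$-terms, by the remark just made;
\item $-5r^5$, as five copies of $\tr\operatorname{diag}(-r,0)^5$;
\item the last term, by your own observation that $10u\in T_{5,2}$ for every $u\in R$.
\end{itemize}
All matrices involved have entries in $\mathbb{Z}[r]$, so your transfer argument applies. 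With this identity your proposal becomes a complete, self-contained proof; without it, it reduces to the same citation of Garge that the paper uses.
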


	Thus, if a matrix in $M_2(R)$ is a sum of fifth powers of matrices over $M_2(R)$, the trace can be written sum of a fifth powers of diagonal elements and specific cyclic sums. By Lemma \ref{lbi5}, the fifth powers can be combined into a fifth power and a sum of the rest of the terms are cyclic sums.
	
	\begin{lem}\label{lcalc52}
		The main two types of five-cycles as given in the remark \ref{r52} $C(x,y,z,y,z), \ C(w,w,z,x,y) \in T_{5,2}.$
	\end{lem}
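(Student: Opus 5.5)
The plan is to isolate the two cyclic sums from the trace formula in Remark \ref{r52}, using sign changes of individual entries. Three earlier facts carry the argument: $T_{5,2}$ is an additive subgroup (Lemma \ref{subgrouplem}); $5r\in T_{5,2}$ for every $r\in R$ (Lemma \ref{pg5r}); and Corollary \ref{coprimecor}. Write $\tau(x,y,z,w)=\tr(A^5)$ for $A=\begin{bmatrix} x & y\\ z & w\end{bmatrix}$, so that $\tau(x,y,z,w)\in T_{5,2}$ for all $x,y,z,w\in R$. By Remark \ref{r52},
$$\tau(x,y,z,w)=x^5+w^5+C(x,y,z,y,z)+C(w,z,y,z,y)+C(x,x,x,z,y)+C(x,x,y,w,z)+C(x,y,w,w,z)+C(y,w,w,w,z).$$
Two further observations will be used. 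First, $c^5=\tr(\mathrm{diag}(c,0)^5)\in T_{5,2}$ for every $c\in R$. Second, replacing an entry by its negative is legitimate, because $-1$ is central: each term is multilinear in the letters, so it changes by the sign $(-1)^{k}$, where $k$ is the number of occurrences of that letter.

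\textbf{Step 1: $C(x,y,z,y,z)\in T_{5,2}$.} Put $w=0$, so that $\tau(x,y,z,0)=x^5+C(x,y,z,y,z)+C(x,x,x,z,y)$. In this expression $C(x,y,z,y,z)$ has degree $2$ in $y$ and $C(x,x,x,z,y)$ has degree $1$ in $y$. Hence
$$\tau(x,y,z,0)+\tau(x,-y,z,0)=2x^5+2C(x,y,z,y,z),$$
and since $x^5\in T_{5,2}$ we get $2C(x,y,z,y,z)\in T_{5,2}$. Lemma \ref{pg5r} gives $5C(x,y,z,y,z)\in T_{5,2}$. Because $\gcd(2,5)=1$, Corollary \ref{coprimecor} yields $C(x,y,z,y,z)\in T_{5,2}$.

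By the same argument with the roles of the diagonal entries swapped (take $x=0$), we also get $C(w,z,y,z,y)\in T_{5,2}$. The difference $\tau(0,y,z,w)-\tau(0,-y,z,w)=2C(y,w,w,w,z)$ then gives $C(y,w,w,w,z)\in T_{5,2}$, again using the coprime trick.

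\textbf{Step 2: $C(w,w,z,x,y)\in T_{5,2}$.} Now take the full matrix and flip the sign of $w$. The terms odd in $w$ are $w^5$, $C(x,x,y,w,z)$ and $C(y,w,w,w,z)$. The remaining terms have even degree in $w$: $C(w,z,y,z,y)$ and $C(x,y,w,w,z)$ have degree $2$ or $0$ as appropriate, and the other terms do not involve $w$. This gives
$$\tau(x,y,z,w)-\tau(x,y,z,-w)=2\bigl(w^5+C(x,x,y,w,z)+C(y,w,w,w,z)\bigr).$$
The elements $w^5$ and $C(y,w,w,w,z)$ are already in $T_{5,2}$, so $2C(x,x,y,w,z)\in T_{5,2}$. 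Lemma \ref{pg5r} and Corollary \ref{coprimecor} then give $C(x,x,y,w,z)\in T_{5,2}$. Since $x,y,z,w$ are arbitrary, renaming the variables turns this into the asserted $C(w,w,z,x,y)$; the cyclic sum is invariant under cyclic rotation of its arguments, so the order of the tuple is handled the same way.

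The main obstacle is the bookkeeping in the trace formula. One must confirm that each listed cyclic sum has the claimed parity in the sign-flipped variable, and that no term of Remark \ref{r52} has been miscounted; otherwise the symmetrization picks up extra cyclic sums. If a stray term does appear, the fallback is to kill it first with a further sign flip in another variable, for instance $x\mapsto -x$, since $C(x,x,y,w,z)$ is even in $x$. After that, the same factor-$2$ plus factor-$5$ coprimality argument applies.
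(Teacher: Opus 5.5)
Your proof is correct in substance and uses the paper's method: sign-flip symmetrization of $\tr(A^5)$, the subgroup property, and $\gcd(2,5)=1$ via Lemma \ref{pg5r}.

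\begin{itemize}
\item Your Step 1 is the paper's $P,T$ computation up to renaming.
\item For the second cycle, the paper flips $x$ and subtracts $2\tr(C^5)$, where $C$ is the matrix with $w=0$. This isolates $2C(x,y,w,w,z)=2C(w,w,z,x,y)$ directly, without first needing the other cycles. You instead flip $w$ and isolate the same type, $C(x,x,y,w,z)$.
\end{itemize}

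There is one parity slip in Step 2. $C(w,z,y,z,y)$ contains $w$ exactly once, so it is odd in $w$, not even. The correct identity is
\[
\tau(x,y,z,w)-\tau(x,y,z,-w)=2\bigl(w^5+C(x,x,y,w,z)+C(w,z,y,z,y)+C(y,w,w,w,z)\bigr).
\]
This does not hurt the argument. You already showed $C(w,z,y,z,y)\in T_{5,2}$ in Step 1, so you simply subtract it along with the other known terms.

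Note that the fallback you proposed would not have worked. Flipping $x$ cannot separate $C(w,z,y,z,y)$ from $C(x,x,y,w,z)$, because both are even in $x$.
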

	\begin{proof}
	Let $A = \begin{bmatrix}
			x & y\\
			z & w\\
		\end{bmatrix}$ 	and $C = \begin{bmatrix}
			x & y\\
			z & 0 \\	
		\end{bmatrix}$. 
		Then we have that $\tr(A^5 + (-C^5)) = w^5+
		C(w,w,w,z,y)+ 
		C(w,z,x,x,y)+ 
		C(w,w,z,x,y)+ 
		C(w,z,y,z,y)$. Finally, if $D = \begin{bmatrix} -x & y\\z & w	\end{bmatrix}$, then
		$\tr(A^5 + D^5) = 2(w^5 +
		C(w,w,w,z,y)+
		C(w,z,x,x,y)+
		C(w,z,y,z,y)).$ Thus, $\tr (A^5) - \tr(D^5) + \tr(-C)^5 + \tr(-C)^5 = \tr (A^5 + (-D^5) + (-C^5) + (-C^5)) = 2C(w,w,z,x,y)$ By Proposition \ref{pg5r}, there exists $M \in M_2(R)$ such that $\tr(M^5) = 5C(w,w,z,x,y)$, so
		$\tr (N^5) - 2(\tr (A^5) + \tr(-D^5) + \tr(-C^5) + \tr(-C^5)) = C(w,w,z,x,y)$.
		
		Consider $P = \begin{bmatrix}w & z\\y & 0	\end{bmatrix}$ and $T = \begin{bmatrix}w & -z\\y & 0\end{bmatrix}$, then $\tr(P^5) + \tr(T^5) = 2 w^5 + 2C(w, z,  y,  z,  y).$ Since $w^5 \in T_{5,2}$ , we yield $2C(w,z,y,z,y) \in T_{5,2}$, by Lemma \ref{subgrouplem}. Thus $C(w,z,y,z,y) \in T_{5,2}$ using Corollary \ref{coprimecor}.
	\end{proof}
	
	\begin{thm}\label{t52}
		Let $R$ be a non-commutative ring with unity.
		Let $T_{5,2}$ be the set of those elements of $R$ that can be expressed as sums of traces of fifth powers over $M_2(R)$. The set $S_{5,2}$ is defined in the remark \ref{r52}.
		\begin{enumerate}
			\item[(i)] $C(w,w,w,x,y), C(x,y,z,y,z), C(x,x,y,w,z)  \in T_{5,2}$. Also $5a \in T_{5,2}$, $a^5 \in T_{5,2}$.
			\item[(ii)]$ T_{5,2}=\biggl\{\displaystyle \sum_{\left(a_{1}, a_{2}, \ldots, a_{5}\right) \in S_{5,2}} C\left(a_{1}, a_{2}, \ldots, a_{5}\right)+\sum_{j=1}^{l} c_{j}^{5} \mid a_i, c_j \in R, 1 \leq i \leq p, 1 \leq j \leq l\biggr\}.$
			\item[(iii)] $T_{5,2}=\biggl\{\displaystyle \sum_{\left(a_{1}, a_{2}, \ldots, a_{5}\right) \in S_{5,2}} C\left(a_{1}, a_{2}, \ldots, a_{5}\right)+c^{5} \mid 1 \leq i \leq p, 1 \leq j \leq l \biggr\}.$	\end{enumerate}
	\end{thm}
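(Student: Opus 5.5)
The argument follows the pattern of Theorem \ref{t1}, with the work split among the three parts. For part (i), the memberships $a^5\in T_{5,2}$ and $5a\in T_{5,2}$ come directly from earlier results. Taking $\operatorname{diag}(a,0)$ gives $\tr(\operatorname{diag}(a,0)^5)=a^5$, and Lemma \ref{pg5r} gives $5a\in T_{5,2}$. For the cyclic sums, Lemma \ref{lcalc52} already gives $C(x,y,z,y,z)\in T_{5,2}$ and $C(w,w,z,x,y)\in T_{5,2}$. Since $x,y,z,w$ are arbitrary, relabelling the variables turns the latter into $C(x,x,y,w,z)$. The remaining sum $C(w,w,w,x,y)$ is obtained from the computation in Lemma \ref{lcalc52}. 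There $\tr(A^5-C^5)$ is expressed as $w^5$ plus four cyclic sums, three of which are already known to lie in $T_{5,2}$: $C(w,z,x,x,y)$ and $C(w,w,z,x,y)$ are instances of type (A), and $C(w,z,y,z,y)$ is type (B). Also $w^5\in T_{5,2}$. By Lemma \ref{subgrouplem}, $T_{5,2}$ is a subgroup, so subtracting these terms leaves $C(w,w,w,z,y)\in T_{5,2}$. Relabelling again gives $C(w,w,w,x,y)$.

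For part (ii), the inclusion ``$\subseteq$'' comes from expanding $\tr(M^5)$ for a general $M\in M_2(R)$, as in Remark \ref{r52}. Theorem \ref{strthm} and Corollary \ref{orbcor} say the trace consists of the fifth powers of the diagonal entries together with full cyclic orbits. Each orbit of size $5$ is a cyclic sum built from the four entries. By the pigeonhole classification in Remark \ref{r52}, each such sum is of type (A) or (B), hence lies in $S_{5,2}$. A finite sum of such traces is therefore of the stated form. For ``$\supseteq$'', part (i) puts every cyclic sum of the two basic types into $T_{5,2}$, with arbitrary entries substituted. Each $c_j^5$ also lies in $T_{5,2}$, and closure under addition (Lemma \ref{subgrouplem}) finishes this inclusion.

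Part (iii) follows from part (ii) and Lemma \ref{lbi5}. That lemma rewrites $\sum_j c_j^5$ as $c^5$, with $c=\sum_j c_j$, plus a sum of cyclic sums. Lemma \ref{lbi5} notes that these are special cases of types (A) and (B), so they are absorbed into the cyclic-sum part. The reverse inclusion is immediate from part (ii).

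The main obstacle is the identification step in part (i) and part (ii). The cyclic sums that actually arise, such as $C(w,z,x,x,y)$ and $C(a,a,a,a,-b)$, each have to be matched to one of the two normalized patterns in $S_{5,2}$ by specializing variables and using rotation invariance of $C$. One must check that a specialization of type (A) or (B) is still covered by part (i), which holds because part (i) is proved for arbitrary entries. For $C(w,w,w,x,y)$, one must check that the extraction from the Lemma \ref{lcalc52} computation does not use circular reasoning, by establishing the other three terms first.
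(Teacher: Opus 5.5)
Your plan follows the paper's route: (i) from Lemma \ref{lcalc52}, the matrix $\operatorname{diag}(a,0)$ and Lemma \ref{pg5r}; (ii) from the expansion of $\tr(M^5)$ together with (i); (iii) from Lemma \ref{lbi5}. Part (i) is fine, although $C(w,w,w,x,y)$ is simply $C(w,w,z,x,y)$ with $z:=w$, so the detour through $\tr(A^5-C^5)$ is unnecessary.

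The gap is the inclusion $\supseteq$ in (ii). For $\subseteq$ you take $S_{5,2}$ to be all tuples of type (A) $(a,b,c,d,a)$ or type (B) $(a,b,a,c,d)$ with arbitrary entries, which is what lets the pigeonhole dichotomy work. You then assert that (i) puts every such cyclic sum into $T_{5,2}$, but it does not. Type (A) is covered by $C(x,x,y,w,z)$. The only type (B) sums in (i), however, are instances of $C(x,y,z,y,z)$, i.e.\ the pattern $(a,b,a,b,c)$ with two repeated pairs. A general $C(a,b,a,c,d)$ with four independent entries generalizes that pattern rather than specializing it, so your closing remark about specializations does not reach it. In fact no cyclic sum of this shape even occurs in the expansion of $\tr(M^5)$ for $M\in M_2(R)$. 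The paper's one-line proof of (ii) passes over the same point.

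The missing step is a polarization. Since $C$ is additive in each slot and invariant under rotation of its arguments,
\[ C(a+b,a+b,a,c,d)=C(a,a,a,c,d)+C(a,b,a,c,d)+C(b,a,a,c,d)+C(b,b,a,c,d). \]
Every term other than $C(a,b,a,c,d)$ has the shape $C(u,u,v,s,t)$ covered by (i); note that $C(b,a,a,c,d)=C(a,a,c,d,b)$. Hence $C(a,b,a,c,d)\in T_{5,2}$ by Lemma \ref{subgrouplem}, and with this your $\supseteq$ argument, and hence (iii), go through.

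Alternatively, you could shrink $S_{5,2}$ to the shapes that actually arise in Remark \ref{r52} and Lemma \ref{lbi5}, namely $(a,a,b,c,d)$ and $(a,b,a,b,c)$ up to rotation. Then $\supseteq$ is immediate from (i), but $\subseteq$ must come from the explicit expansion of $\tr(M^5)$, not from the type (A)/(B) dichotomy. More is in fact true: $C(1,1,1,b,a)=ab+4ba$ lies in $T_{5,2}$ by (i), and $5ba$ lies there by Lemma \ref{pg5r}. So every commutator lies in $T_{5,2}$, and Proposition \ref{p1} then puts every cyclic sum there.
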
 
	\begin{proof}
		\begin{enumerate}
			\item[(i)] Follows from Lemma \ref{lcalc52}. Also, $\tr \begin{bmatrix}
				a & 0 \\ 0 & 0
			\end{bmatrix}^5 = a^5$.  $5a \in T_{5,2}$ by Proposition \ref{pg5r}	.		
			\item[(ii)] Follows from Theorem \ref{strthm}  and (i).
			\item[(iii)] Follows from Lemma \ref{lbi5} and (ii).
		\end{enumerate}
	\end{proof}
	
	To conclude, we state:
	\begin{thm}\label{t52final} Let $R$ be a non-commutative ring with unity. Then, $A \in M_2(R)$ is a sum of fifth powers over $M_2(R)$ if and only if $\tr(A)$ is a sum of fifth powers and some specific five-cycles from $S_{5,2}$.
	\end{thm}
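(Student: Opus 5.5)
The plan is to reduce the statement to Theorem \ref{t2} (Katre--Wadikar) and then identify the additive group $T_{5,2}$ using Theorem \ref{t52}. By Theorem \ref{t2} with $n=2$, $k=5$, a matrix $A\in M_2(R)$ is a sum of fifth powers in $M_2(R)$ if and only if $\tr(A)\in T_{5,2}$. So it suffices to show that $T_{5,2}$ is exactly the set of elements of the form
\[
\sum_{(a_1,\ldots,a_5)\in S_{5,2}} C(a_1,\ldots,a_5)+\sum_{j=1}^{l}c_j^5 .
\]
This is the content of Theorem \ref{t52}(ii), and by part (iii) one fifth power even suffices.

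For the forward direction, suppose $A=\sum_i B_i^5$. Then $\tr(A)=\sum_i\tr(B_i^5)$. For each $B_i=\begin{bmatrix} x&y\\ z&w\end{bmatrix}$, Remark \ref{r52} expands $\tr(B_i^5)$ as
\[
x^5+w^5+C(x,y,z,y,z)+C(w,z,y,z,y)+C(x,x,x,z,y)+C(x,x,y,w,z)+C(x,y,w,w,z)+C(y,w,w,w,z).
\]
Every cyclic sum here is of type (A) or (B), so it lies in $S_{5,2}$. Summing over $i$ gives the required form. If one wants a single fifth power, Lemma \ref{lbi5} merges the diagonal fifth powers into one fifth power plus further cyclic sums, which are again of types (A)/(B).

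For the converse, suppose $\tr(A)=\sum C(\cdot)+\sum c_j^5$ with the cyclic sums indexed by $S_{5,2}$. Each $c_j^5$ equals $\tr\begin{bmatrix} c_j&0\\0&0\end{bmatrix}^5$, so it lies in $T_{5,2}$. Each cyclic sum of type (A) or (B) lies in $T_{5,2}$ by Lemma \ref{lcalc52} and Theorem \ref{t52}(i), after substituting arbitrary ring elements for the variables $x,y,z,w$. Since $T_{5,2}$ is an additive subgroup (Lemma \ref{subgrouplem}), $\tr(A)\in T_{5,2}$, and Theorem \ref{t2} then finishes.

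The main obstacle is the converse: showing that every five-cycle of types (A) and (B), with fully arbitrary entries, belongs to $T_{5,2}$. Lemma \ref{lcalc52} gives $C(w,w,z,x,y)$ with four free variables, which covers type (A) up to cyclic rotation. However, type (B), $C(a,b,a,c,d)$, is only directly exhibited in the specialized form $C(x,y,z,y,z)$. I would first try to obtain general type (B) by a further trace combination. One candidate is $\begin{bmatrix} x&y\\ z&w\end{bmatrix}$ against sign-flipped and zero-entry variants, isolating $C(x,y,w,w,z)$-type terms. I would then use $5r\in T_{5,2}$ (Lemma \ref{pg5r}) together with Corollary \ref{coprimecor} to clear the factor $2$. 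If this cannot be done, I would restrict the statement's set $S_{5,2}$ to the cycles actually appearing in Remark \ref{r52}, namely those written in the entries $a_{ij}$. That restricted set suffices for both directions.
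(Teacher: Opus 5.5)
\textbf{Overall.} Your route is the paper's own. You reduce to Theorem \ref{t2}, read off the forward direction from the expansion in Remark \ref{r52}, and for the converse use a diagonal matrix for $c^5$ and cite Theorem \ref{t52}. You are also right that one step is unsupported. Lemma \ref{lcalc52}, and hence Theorem \ref{t52}(i) on which the paper's converse rests, produces type (B) only in the specialised form $C(p,q,p,q,r)$. So if $S_{5,2}$ contains $C(a,b,a,c,d)$ with arbitrary $a,b,c,d$, neither your argument nor the cited results put these in $T_{5,2}$.

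\textbf{Why your proposed repairs do not work as stated.} Sign changes and zeroed entries in a single $2\times 2$ matrix only separate the cycles already present in the expansion of Remark \ref{r52}. A general $C(a,b,a,c,d)$ is not among them. Moreover, the $C(x,y,w,w,z)$-type terms you propose to isolate are type (A), which Lemma \ref{lcalc52} already gives you. The fallback also needs care. The list of cycles written in the entries $a_{ij}$ in Remark \ref{r52} contains only the cycles with two $y$'s and two $z$'s. Restricting $S_{5,2}$ to that list would drop type (A) cycles such as $C(x,x,x,y,z)$, which occur in every trace, and the forward direction would fail. In any case, restricting $S_{5,2}$ changes the statement rather than proving it.

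\textbf{The missing idea: polarise the type (A) result.} The cyclic sum $C$ is additive in each argument and invariant under rotation of its arguments. Expanding $C(a+b,a+b,a,c,d)$ in the first two slots and rotating $C(b,a,a,c,d)=C(a,a,c,d,b)$ gives
\[
C(a,b,a,c,d)=C(a+b,a+b,a,c,d)-C(a,a,a,c,d)-C(b,b,a,c,d)-C(a,a,c,d,b).
\]
Each term on the right is of the form $C(w,w,z,x,y)$, so it lies in $T_{5,2}$ by Lemma \ref{lcalc52}. Since $T_{5,2}$ is an additive subgroup (Lemma \ref{subgrouplem}), $C(a,b,a,c,d)\in T_{5,2}$ for all $a,b,c,d\in R$. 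Thus every cycle of type (A) or (B) with arbitrary entries lies in $T_{5,2}$. No restriction of $S_{5,2}$ is needed, and the rest of your argument goes through as written.
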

	\begin{proof}
		Suppose $A = \displaystyle \sum_{i=1}^{m} M_i^5, M_i \in M_2(R)$ for all $1\leq i \leq m.$ By Theorem \ref{strthm}, trace of $A$ will be sums of $5$-cycles. We observe that $\tr(A) = \displaystyle \sum_{i=1}^{m} \tr(M_i^5) = \displaystyle \sum_i (a_i^5 + b_i^5 + C_i)$ where $a_i, b_i$ are the diagonal elements of $M_i$ and $C_i$ are the  five-cycles described as type (A) and (B) from Remark \ref{r52}. By Lemma \ref{lbi5}, $\displaystyle \sum a_i^5 + b_i^5$ can be combined into a single fifth power and more cyclic sums, say $C^\prime.$
				
		Hence $\tr(A) = \displaystyle \sum_{i=1}^{m} M_i^5 = \alpha^5 + \tilde{C}$ for some $\alpha \in R$, and $\tilde{C} = \sum C_i + C^\prime$, essentially sums of five-cycles from $S_{5,2}$.
		
		For the converse, suppose $\tr(A) = \alpha^5 + \tilde{C}$, where $\tilde{C}$ is the sums of specific five-cyclic sums strictly from $S_{5,2}$.
		
		By Theorem \ref{t52}, there exist $N_i \in M_2(R)$ such that $\displaystyle \sum_{i=1}^t \tr(N_i^5) = \tilde{C}$. Clearly $N = \begin{bmatrix}
			a & 0 \\ 0 & 0
		\end{bmatrix}$ satisfies $\tr(N^5) = \alpha^5$.
		So, $\tr(A) = \tr (\displaystyle \sum_{i=1}^{s+1} N_i^5)$, where $N_{s+1} = N$.
		Finally, we appeal to Theorem \ref{t2} to conclude that $A = \displaystyle \sum_{i=1}^{r} N_i^{\prime \ 5}$ for some $N_i^{\prime \ 5} \in M_2(R)$. 
	\end{proof}
		
	\subsection{Order of matrices, $n=3, 4$ and prime power, $p = 5$.}
	In this section, we begin by obtaining the most general five-cycle with as sums of traces of fifth powers of matrices over $M_3(R).$ Doing this enables us to immediately establish a version of Theorem \ref{t52} for $n =3$ as well as $n = 4$.
	\begin{defn}\label{S53def}
	The set $S_{5,3}$ is representatives of orbits of $5$-tuples of elements of ${r, s, t, u, v, w, x, y, z} \subseteq R$, and $S^\prime_{5,3}$ is set of such representatives in which we have at least two unequal entries.
	\end{defn}
	
	\begin{lem}\label{lcalc53}
		Any five-cyclic sum can be written as sums of trace of a fifth powers of matrices in $M_3(R)$. i.e. similar to above notation, $C(a,b,c,d,e) \in T_{5,3}$, where $a, b, c, d, e \in R$.
	\end{lem}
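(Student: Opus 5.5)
The plan is to realize $C(a,b,c,d,e)$, with $a,b,c,d,e\in R$ arbitrary, as a signed combination of traces of fifth powers of $3\times 3$ matrices. Three ingredients are needed: a suitable pattern of nonzero entries, an inclusion--exclusion over which entries are switched on, and the closure facts from Lemma \ref{subgrouplem}. By Theorem \ref{strthm}, $\tr(A^5)$ is the sum over all closed walks of length $5$ in the directed graph on $\{1,2,3\}$ whose edges are the nonzero entries $a_{ij}$. The word attached to a walk is the product of the entries along it. So I will choose an entry pattern in which every closed walk that uses each of five chosen edges exactly once spells a rotation of one fixed word.

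Concretely, take $A=A(a,b,c,d,e)$ with $a_{11}=e$, $a_{12}=a$, $a_{23}=b$, $a_{32}=c$, $a_{21}=d$, and all other entries $0$. The closed walk
\[
1\to1\to2\to3\to2\to1
\]
spells $eabcd$. I claim it is, up to choice of starting point, the only closed walk of length $5$ using each of these five edges exactly once. This is an Eulerian circuit count. The walk must use the loop at $1$ and the edge $1\to2$. At vertex $2$ it cannot take $2\to1$ before $2\to3\to2$, since vertex $3$ would then be stranded. Hence the circuit is unique up to rotation.

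Its five rotations start at the vertices $1,1,2,3,2$ respectively. By Corollary \ref{orbcor} and the variables being distinct, each rotation appears exactly once in $\tr(A^5)$. Their total is exactly $C(e,a,b,c,d)=C(a,b,c,d,e)$.

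To isolate these terms, for each subset $S\subseteq\{a,b,c,d,e\}$ let $A_S$ be the matrix $A$ with the entries not in $S$ replaced by $0$. Then form
\[
\sum_{S}(-1)^{5-|S|}\tr(A_S^5).
\]
Work at the level of the free (noncommutative) polynomial in the five symbols and substitute afterwards. A word survives in this sum precisely when it involves every symbol, which is the usual inclusion--exclusion. A word of length $5$ involving all five symbols uses each exactly once, so by the previous paragraph the sum equals $C(a,b,c,d,e)$.

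Finally, $-\tr(M^5)=\tr((-M)^5)$, so every signed term is itself a trace of a fifth power. Together with Lemma \ref{subgrouplem}, this puts $C(a,b,c,d,e)$ in $T_{5,3}$, with no division by $2$ or $5$ needed, so Corollary \ref{coprimecor} is not required.

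The main step to check carefully is the uniqueness of the Eulerian circuit, including that no walk of length $5$ reuses an edge while still covering all five symbols. The latter is impossible simply by length. If the uniqueness argument failed, I would fall back on a different placement, for example a genuine $3$-cycle $1\to2\to3\to1$ plus a loop and a $2$-cycle, and redo the circuit count.
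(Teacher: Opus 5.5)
Your proof is correct, but it removes the unwanted terms differently from the paper. Your edge pattern has the same shape as the paper's $D=\begin{bmatrix}0&a&0\\ e&0&b\\ 0&d&c\end{bmatrix}$: the path $1$--$2$--$3$ traversed in both directions, with a loop at an end vertex. The paper does not use inclusion--exclusion. It expands $\tr(D^5)=c^5+C(c,c,c,d,b)+C(c,d,b,d,b)+C(a,b,c,d,e)$ explicitly (16 terms) and notes that the unwanted terms are exactly the walks avoiding the pendant vertex $1$. Each such word contains an odd number of $c$'s, so one matrix $F$ cancels them all: $F$ carries the $\{b,c,d\}$-part of $D$ with the loop entry $-c$. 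This gives $C(a,b,c,d,e)=\tr(D^5)+\tr(F^5)$.

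The paper's route uses only two matrices but needs the full expansion. Yours uses up to $31$ signed matrices but needs no expansion, only the uniqueness of the Eulerian circuit. Neither requires division.

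Your template is also uniform. A path on $(p+1)/2$ vertices, traversed both ways with a loop at one end, has $p$ edges and a unique Eulerian circuit. The same inclusion--exclusion then gives $C(a_1,\dots,a_p)\in T_{p,n}$ for every $n\ge (p+1)/2$, with no division. For $p=7$, $n=4$ this is exactly the entry pattern of the paper's $M_1$ in Theorem \ref{t74}. So your argument would replace the detour there through $8C$, the $7r$ lemma and Corollary \ref{coprimecor}. By the BEST theorem, a unique Eulerian circuit forces every out-degree to be at most $2$. Hence this single-pattern method needs $p\le 2n$, which is precisely the range in which the paper obtains arbitrary cycles.

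A minor point: your appeals to Corollary \ref{orbcor} and Lemma \ref{subgrouplem} are unnecessary. What you actually use is that the five rotations are five distinct closed walks, and a sum of traces of fifth powers lies in $T_{5,3}$ by definition.
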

	\begin{proof}
		Consider the matrices:
		$D = \begin{bmatrix}
			0 & a& 0\\ e & 0 & b\\ 0 & d & c\\
		\end{bmatrix},
		F = \begin{bmatrix}
			-c & d & 0\\ b & 0 & 0\\ 0 & 0 & 0\\
		\end{bmatrix} $
		
		Then $\tr(D^5) = abcde + bc^3d + bcdbd + bcdea + bdbcd + c^5 + 
		c^3db + c^2dbc + cdbc^2 + cdbdb + cdeab + dbc^3 + dbcdb + dbdbc + 
		deabc + eabcd = c^5 + C(a,b,c,d,e) +$\\
		$C(c,d,b,d,b) + C(c,c,c,d,b)$ and $\tr(F^5) = -bc^3d - bcdbd - bdbcd - c^5 - c^3db - c^2dbc - cdbc^2 - cdbdb - dbc^3 - dbcdb - dbdbc = - [c^5 + C(c,c,c,d,b) + C(c,d,b,d,b)]$.
		
		It is now clear that $C(a,b,c,d,e) = abcde + bcdea + cdeab + deabc + eabcd = \tr(D^5) + \tr(F^5).$ By Lemma \ref{subgrouplem}.
	\end{proof}
	
	\begin{cor}\label{c5r53}
		Any element of the form $5r$ for any $r \in R$ can be expressed as sums of traces of fifth powers of matrices in $M_3(R)$.
	\end{cor}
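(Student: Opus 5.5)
The quickest route is to combine Lemma \ref{lcalc53} with the embedding $T_{5,2}\subset T_{5,3}$ from Lemma \ref{subgrouplem}. Lemma \ref{pg5r} says that every element $5r$, $r\in R$, is a sum of traces of fifth powers over $M_2(R)$, so $5r\in T_{5,2}$. Padding each of the $2\times 2$ matrices used there with a zero row and a zero column does not change the traces of their fifth powers. Hence $5r\in T_{5,3}$, and this would already finish the proof.

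We can also give a self-contained argument inside $M_3(R)$ that mirrors Proposition \ref{p1}. Take the cyclic sum with all five entries equal to distinct choices. Specialise Lemma \ref{lcalc53} to $C(1,1,1,1,r)$. Since $1$ commutes with everything, each of the five terms in the cyclic sum equals $r$, so
\[
C(1,1,1,1,r) = r+r+r+r+r = 5r .
\]
By Lemma \ref{lcalc53}, $C(a,b,c,d,e)\in T_{5,3}$ for arbitrary $a,b,c,d,e\in R$. This includes the degenerate choice $a=b=c=d=1$, $e=r$.

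We should check that the proof of Lemma \ref{lcalc53} does not need the entries to be distinct. It expresses $C(a,b,c,d,e)$ as $\tr(D^5)+\tr(F^5)$, and this is a polynomial identity in the entries, valid for any substitution. So the explicit matrices are
\[
D=\begin{bmatrix}0&1&0\\ r&0&1\\ 0&1&1\end{bmatrix},\qquad F=\begin{bmatrix}-1&1&0\\ 1&0&0\\ 0&0&0\end{bmatrix},
\]
and they satisfy $\tr(D^5)+\tr(F^5)=5r$.

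The only step needing care is exactly this validity of the substitution. The cancellation between $\tr(D^5)$ and $\tr(F^5)$ was derived symbolically, term by term, so it survives specialisation. We do not need to divide by anything, nor to assume that the orbit has exactly five distinct elements. Either route gives the corollary.
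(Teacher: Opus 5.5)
Your proposal is correct. Your second argument is exactly the paper's proof. The paper simply observes $5r=C(r,1,1,1,1)$ and invokes Lemma~\ref{lcalc53}. Your remark that $\tr(D^5)+\tr(F^5)=C(a,b,c,d,e)$ is a polynomial identity in non-commuting variables, so it survives the substitution $a=b=c=d=1$, $e=r$, is the right justification. Your explicit matrices do give $\tr(D^5)=11+5r$ and $\tr(F^5)=-11$.

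Your first route is genuinely different and also valid: it uses Lemma~\ref{pg5r} together with the zero-padding inclusion $T_{5,2}\subset T_{5,3}$ of Lemma~\ref{subgrouplem}.
\begin{itemize}
\item What it gives: it proves the stronger fact $5r\in T_{5,2}$.
\item What it rests on: Garge's imported commutative lemma. That lemma transfers to non-commutative $R$ most cleanly by applying it inside the commutative subring $\mathbb{Z}[r]\subseteq R$.
\item Why the paper takes the other route: the paper's argument is self-contained in $M_3(R)$, using a two-matrix certificate with no external input. That is presumably why the corollary is stated separately after Lemma~\ref{lcalc53}.
\end{itemize}

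The sentence about taking the cyclic sum ``with all five entries equal to distinct choices'' is garbled, since the entries you actually use are not distinct. It does not affect the argument.
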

	\begin{proof}
		Observe that $5r = C(r,1,1,1,1)$. Now Lemma \ref{lcalc53} applies.
	\end{proof}
	\begin{thm}\label{t53}
		Let $R$ be a non-commutative ring with unity.
		Let $T_{5,3}$ be the set of those elements of $R$ that can be expressed as sums of traces of fifth powers over $M_n(R)$, where $n = 3$ or $n = 4$.
		
		\begin{enumerate}
			\item[(i)] Any arbitrary $5$-cycle $C(a,b,c,d,e)\in T_{5,n}$. Also $5r, a^5 \in T_{5,n}$.
			\item[(ii)] $T_{5,n}=\biggl\{\displaystyle \sum_{\left(a_{1}, a_{2}, \ldots, a_{5}\right) \in S_{5,3}} C\left(a_{1}, a_{2}, \ldots, a_{5}\right)+\sum_{j=1}^{l} c_{j}^{5} \mid l \geq 1, a_{i}, c_{j} \in R, 1 \leq i \leq 5, 1 \leq j \leq l\biggr\} .$
			\item[(iii)] $T_{5,n}=\biggl\{\displaystyle \sum_{\left(a_{1}, a_{2}, \ldots, a_{5}\right) \in S_{5,3}} C\left(a_{1}, a_{2}, \ldots, a_{p}\right)+c^{5} \mid a_i \in R, 1 \leq i \leq 5 \biggr\}.$
			\item[(iv)] $T_{5,n}=\biggl\{\displaystyle \sum_{j=1}^{q}\left(a_{j} b_{j}-b_{j} a_{j}\right)+\displaystyle \sum_{j=1}^{l} c_{j}^{5}+5 r \mid a_{j}, b_{j}, c_{j}, r \in R, q \geq 1, l \geq 1\biggr\}$.
			\item[(v)] $T_{5,n}=\biggl\{\displaystyle \sum_{j=1}^{q}\left(a_{j} b_{j}-b_{j} a_{j}\right)+c^{5}+5 r \mid a_{j}, b_{j}, c, r \in R, q \geq 1, l \geq 1\biggr\}$.
		\end{enumerate}
	\end{thm}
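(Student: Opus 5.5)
The plan is to follow the template of Theorem \ref{t1} and Theorem \ref{t52}. First we reduce the case $n=4$ to $n=3$ via Lemma \ref{subgrouplem}. Then we prove (i)–(v) for $T_{5,3}$, using Lemma \ref{lcalc53} and Corollary \ref{c5r53} as the main inputs.

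For (i), Lemma \ref{lcalc53} gives $C(a,b,c,d,e)\in T_{5,3}$ for arbitrary $a,b,c,d,e\in R$. Corollary \ref{c5r53} gives $5r\in T_{5,3}$. Finally, $a^5=\tr(\mathrm{diag}(a,0,0)^5)$. By Lemma \ref{subgrouplem} we have $T_{5,3}\subseteq T_{5,4}$, so all of these elements also lie in $T_{5,4}$. For $n=4$, one uses the $4\times 4$ analogue of Theorem \ref{strthm} and Corollary \ref{orbcor}. Every trace $\tr(M^5)$ with $M\in M_4(R)$ is a sum of $p$-th powers of diagonal entries together with cyclic sums $C(a_1,\dots,a_5)$ of entries of $M$. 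Each such cyclic sum lies in $T_{5,3}$ by Lemma \ref{lcalc53}. Hence $T_{5,4}\subseteq T_{5,3}$, and therefore $T_{5,3}=T_{5,4}$. This lets us treat $n=3,4$ simultaneously.

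For (ii), the inclusion ``$\subseteq$'' comes from expanding $\tr(M^5)$ with Theorem \ref{strthm}. The terms with orbit size $1$ are the $a_{ii}^5$, and the remaining terms group into full orbits, i.e.\ cyclic sums, by Corollary \ref{orbcor}. Each orbit is represented by a tuple of entries, which lies in $S_{5,3}$ after renaming the (at most nine) entries as $r,\dots,z$. The inclusion ``$\supseteq$'' follows from (i) together with the subgroup property in Lemma \ref{subgrouplem}. For (iii), we apply Lemma \ref{lbi5} to merge $\sum_j c_j^5$ into a single fifth power plus extra cyclic sums; those extra cyclic sums again lie in $T_{5,n}$ by (i).

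For (iv) and (v), we use Proposition \ref{p1}: each cyclic sum equals a sum of commutators plus an element of $5R$. Combined with (ii) and (iii), this gives the inclusions ``$\subseteq$''. For the reverse inclusions we need three facts: $5r\in T_{5,n}$, every $c^5\in T_{5,n}$, and every commutator lies in $T_{5,n}$. The commutator step is the main point to check. We write $ab-ba=C(a,b,1,1,1)-5ba$. Indeed, $C(a,b,1,1,1)=ab+b\cdot a+ab\cdot$ arrangements, but more carefully, the five cyclic rotations of $(a,b,1,1,1)$ give $ab,ba,ab,ab,ab$ in some order. Thus $C(a,b,1,1,1)=4ab+ba$, and so $ab-ba=C(a,b,1,1,1)-5ba\in T_{5,n}$. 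The right-hand set is then contained in $T_{5,n}$ because $T_{5,n}$ is a group. The step needing the most care is verifying that the renaming into $S_{5,3}$ in (ii) is legitimate for $4\times4$ matrices. However, since Lemma \ref{lcalc53} covers arbitrary five-tuples, the distinction is cosmetic.
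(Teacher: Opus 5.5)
Your outline follows the paper's proof of Theorem~\ref{t53} step for step: Lemma~\ref{lcalc53} and Corollary~\ref{c5r53} for (i), Theorem~\ref{strthm} for (ii), Lemma~\ref{lbi5} for (iii), and Proposition~\ref{p1} for (iv)--(v). Your explicit observation that $T_{5,3}=T_{5,4}$ is correct. It makes precise what the paper does only implicitly through Lemma~\ref{subgrouplem}.

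There is an arithmetic error in the step you flagged as the main point to check. You correctly compute $C(a,b,1,1,1)=4ab+ba$. But then $C(a,b,1,1,1)-5ba=4ab-4ba=4(ab-ba)$, not $ab-ba$, so the identity you state is false.

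The repair takes one line. Write $ab-ba=5ab-C(a,b,1,1,1)$; both terms lie in $T_{5,n}$ by (i), so their difference does too by the subgroup property. Alternatively, $4(ab-ba)$ and $5(ab-ba)$ both lie in $T_{5,n}$, and Corollary~\ref{coprimecor} then gives $ab-ba\in T_{5,n}$.

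With this correction your argument is complete. It also supplies the reverse inclusions in (iv) and (v), which the paper's proof does not address: citing Proposition~\ref{p1} alone only shows that $T_{5,n}$ is contained in the right-hand sets.
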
 
	\begin{proof}
		\begin{enumerate}
			\item[(i)] This follows from Lemma \ref{lcalc53}. Since $C(a,b,c,d,e) \in T_{5,n}$ for any $a,b,c,d,e \in R$. Clearly, $a^5 = \tr([\operatorname{diag}(a,0,0,0,0)]^5)$.
			\item[(ii)] By (i), R.H.S. of (ii) $\subset T_{5,3}$. From Theorem \ref{strthm}, $ T_{5,n} \subset$ R.H.S. of (ii).
			\item[(iii)]This follows from Lemma \ref{lbi5} and (ii).
			\item[(iv)] This is Proposition \ref{p1} applied to the expression in (iii).
			\item[(v)] This follows from (iv) after combining several fifth powers into a fifth power and elements from $5R$ using Lemma \ref{lbi5}.
		\end{enumerate}
	\end{proof}
	To conclude, we state:
	\begin{thm}\label{t53final} Let $R$ be a non-commutative ring with unity, let $n = 3$ or $4$. Then, $A \in M_n(R)$ is a sum of fifth powers over $M_n(R)$ if and only if $\tr(A)$ is a sum of a fifth power and some specific five-cycles.
	\end{thm}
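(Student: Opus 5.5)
The plan is to imitate the proof of Theorem \ref{t52final} and use Theorem \ref{t53} in place of Theorem \ref{t52}. Since every five-cycle already lies in $T_{5,3}\subseteq T_{5,4}$, no case analysis over types of cycles is needed. Throughout, $n\in\{3,4\}$.

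\emph{Forward direction.} Suppose $A=\sum_{i=1}^m M_i^5$ with $M_i\in M_n(R)$. Additivity of trace gives $\tr(A)=\sum_i \tr(M_i^5)$. By Theorem \ref{strthm} and Corollary \ref{orbcor}, each $\tr(M_i^5)$ splits into two kinds of terms. The orbits of size $1$ give the fifth powers $a_{jj}^5$ of the diagonal entries of $M_i$. The orbits of size $5$ group into cyclic sums $C(a_1,\dots,a_5)$ whose entries come from $M_i$; these are of the type indexed by $S_{5,3}$ in Definition \ref{S53def}. For $n=4$ there are more symbols, but a five-cycle uses at most five entries, so the same representative set covers it. Lemma \ref{lbi5} then combines all the diagonal fifth powers into a single fifth power $\alpha^5$ plus further cyclic sums. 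Hence $\tr(A)=\alpha^5+\tilde C$, where $\tilde C$ is a sum of five-cycles.

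\emph{Converse.} Suppose $\tr(A)=\alpha^5+\tilde C$ with $\tilde C=\sum_k C(a_{k1},\dots,a_{k5})$. Lemma \ref{lcalc53} gives, for each $k$, matrices $D_k,F_k\in M_3(R)$ with $\tr(D_k^5)+\tr(F_k^5)=C(a_{k1},\dots,a_{k5})$. For $n=4$, pad each matrix with a zero row and a zero column as in Lemma \ref{subgrouplem}, which preserves the trace of the fifth power. Add $N=\operatorname{diag}(\alpha,0,\dots,0)$, which has $\tr(N^5)=\alpha^5$. Together these show that $\tr(A)$ is a sum of traces of fifth powers in $M_n(R)$. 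Equivalently, $\tr(A)\in T_{5,n}$ by Theorem \ref{t53}(iii). Theorem \ref{t2} then yields that $A$ itself is a sum of fifth powers in $M_n(R)$.

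The only point that needs care is the bookkeeping in the forward direction for $n=4$. There one must check that every non-diagonal term of $\tr(M^5)$ really does group into full orbits, so that the cyclic sums are genuine. Corollary \ref{orbcor} supplies exactly this, because five is prime and every orbit has size $1$ or $5$. Once that holds, the argument reduces to Theorem \ref{t53} together with Theorem \ref{t2}. One could also strengthen the statement via Theorem \ref{t53}(v) to read ``a fifth power plus commutators modulo $5R$''.
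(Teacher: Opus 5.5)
Your proposal is correct and follows the paper's route. The forward direction splits each $\tr(M_i^5)$ into diagonal fifth powers and cyclic sums and then merges the fifth powers via Lemma~\ref{lbi5}. The converse uses Lemma~\ref{lcalc53} (i.e.\ Theorem~\ref{t53}(i)), padding to $M_4(R)$, the diagonal matrix for $\alpha^5$, and finally Theorem~\ref{t2}; you are just spelling out what the paper compresses into ``follows similarly to the proof of Theorem~\ref{t52final}''.
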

	\begin{proof}
		Proof follows similarly to the proof of Theroem \ref{t52final}. By Theorem \ref{t2}, the converse follows.
	\end{proof}
	
	\section{The prime $p = 7$.}
	We now consider the case where the prime $p = 7$. We try to obtain conditions for matrices to be sums of powers over $M_n(R)$ where $n = 4, 5, 6$. Similar results for matrices over commutative rings obtained by Garge \cite{G} are used to obtain the results for matrices over non-commutative rings.
		
	\begin{lem}\label{l7}{\rm [\cite{G}, Lemma 2.8]}
		Let $R$ be a ring. Then every element of the form $7r$, $r \in R$ occurs as a sum of traces of seventh powers over $M_2(R)$.
	\end{lem}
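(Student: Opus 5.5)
The plan mirrors the proof of Lemma \ref{pg5r}, i.e. Garge's argument for seventh powers over commutative rings, and then checks that it survives without commutativity. The point is that the matrices used there have entries in the subring $\mathbb{Z}[r]\subseteq R$ generated by $1$ and a single element $r$. This subring is commutative even when $R$ is not, since all its elements are integer polynomials in $r$. So every trace identity $\tr(M^7)=f(r)$ computed in $M_2(\mathbb{Z}[r])$ holds verbatim in $M_2(R)$, and I only need to exhibit the identities.

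\textbf{Step 1: reduce to the relevant coefficients.} For $M=\begin{bmatrix} a & b\\ c & d\end{bmatrix}$ with commuting entries, the trace $\tr(M^7)$ is a symmetric polynomial in the eigenvalues. I will work with the simple matrices $N_r=\begin{bmatrix} r & 1\\ 1 & 0\end{bmatrix}$, $N_r'=\begin{bmatrix} r & -1\\ 1 & 0\end{bmatrix}$ and $\begin{bmatrix} r & 0\\ 0 & 0\end{bmatrix}$. The last one has trace of its seventh power equal to $r^7$. For $N_r$ the eigenvalues satisfy $\lambda^2=r\lambda+1$, and by Newton's identities
\[
\tr(N_r^7)=r^7+7r^5+14r^3+7r.
\]
With $\lambda^2=r\lambda-1$ one gets instead
\[
\tr({N_r'}^7)=r^7-7r^5+14r^3-7r.
\]

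\textbf{Step 2: combine.} Taking $\tr(N_r^7)-\tr({N_r'}^7)$ isolates $14r^5+14r$, and taking the sum isolates $2r^7+28r^3$. Subtracting traces of $\operatorname{diag}(r,0)^7$ and using Lemma \ref{subgrouplem}, I get $28r^3\in T_{7,2}$, and also
\[
7r^5+14r^3+7r\in T_{7,2}.
\]
To obtain $7r$ itself, I will use a more flexible family, namely $\begin{bmatrix} r & s\\ 1 & 0\end{bmatrix}$ with $s\in\{\pm1,\pm2,\dots\}$ an integer. The coefficients of $\tr(M^7)-r^7$ are then $7s r^5+14s^2r^3+7s^3 r$, all of which are multiples of $7$. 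Forming integer combinations over several values of $s$ (a Vandermonde-type elimination in $s,s^2,s^3$) isolates $7\cdot k\, r$ for some integer $k$, for example $k=6$ from $s=1,2,3$ after clearing denominators. Separately, replacing $r$ by $r+1$, $r-1$ and using a binomial-difference argument, I get a second such multiple $7k' r$ with $\gcd(k,k')=1$. Corollary \ref{coprimecor}, applied to the element $7r$, then gives $7r\in T_{7,2}$.

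\textbf{Main obstacle.} The main obstacle is the gcd bookkeeping in the last step. The Vandermonde elimination naturally introduces factors such as $2$ and $3$, so I must make sure that two such eliminations produce coprime multiples of $7r$. If the integer family above does not suffice, I would first try adding diagonal perturbations such as $\begin{bmatrix} r & 1\\ 1 & \pm1\end{bmatrix}$ to get further independent combinations. As a fallback, I would take Garge's explicit matrices from \cite{G} and note only that their entries lie in $\mathbb{Z}[r]$, so that his identity transfers directly.
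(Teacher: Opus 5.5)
Your reduction to the commutative subring $\mathbb{Z}[r]$ matches the paper, whose whole justification is citing Garge and observing that his matrices have entries $0,1,r,1+r$. Your computations are also correct: $\tr(M^7)-r^7=7sr^5+14s^2r^3+7s^3r$ for $M=\begin{bmatrix} r & s\\ 1 & 0\end{bmatrix}$, and the Vandermonde step gives $42r\in T_{7,2}$. The gap is the claimed second multiple $7k'r$ with $\gcd(6,k')=1$, because the tools you name cannot produce an odd $k'$. To see this, reduce modulo $2$:
\begin{itemize}
\item $\tr\left(\begin{bmatrix} x & s\\ 1 & 0\end{bmatrix}^7\right)\equiv x^7+s(x^5+x)$;
\item $\operatorname{diag}(x,0)$ contributes $x^7$;
\item for $x=r+c$ with $c\in\mathbb{Z}$, one has $x\equiv r$ or $x\equiv r+1$.
\end{itemize}
So everything you generate lies, modulo $2\mathbb{Z}[r]$, in the $\mathbb{F}_2$-span of $1$, $r^7$, $(r+1)^7=r^7+r^6+\cdots+r+1$, $r^5+r$ and $r^5+r^4$. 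This span does not contain $r$: $r^6$ occurs only in $(r+1)^7$, then $r^4$ only in $r^5+r^4$, then $r^5$ only in $r^5+r$. Hence every $7k'r$ you reach has $k'$ even, and Corollary \ref{coprimecor} never applies. Your perturbations $\begin{bmatrix} r & 1\\ 1 & \pm1\end{bmatrix}$ only add $r^6+r^2$ modulo $2$ and do not help. As written, the argument stops at $42r$. Your final fallback, citing Garge's matrices, is just the paper's citation.

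The missing idea is to let the determinant be a non-constant polynomial in $r$. For a matrix with commuting entries, trace $t$ and determinant $\delta$, one has $\tr(M^7)=t^7-7t\delta(t^2-\delta)^2$. Take $t\in\mathbb{Z}[r]$, $\epsilon=\pm1$ and $M_\epsilon=\begin{bmatrix} t & \epsilon-t^2\\ 1 & 0\end{bmatrix}$, which has $\delta=t^2-\epsilon$. Then $\tr(M_\epsilon^7)=t^7-7t^3+7\epsilon t$, so $14t\in T_{7,2}$ and $7(t^3-t)\in T_{7,2}$ for every $t\in\mathbb{Z}[r]$. Now put $U=\{g\in\mathbb{Z}[r]: 7g\in T_{7,2}\}$. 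It is a subgroup containing $2\mathbb{Z}[r]$, every $t^7$, and every $t^3-t$. Since $(t_1+t_2)^3-(t_1+t_2)-(t_1^3-t_1)-(t_2^3-t_2)=3t_1t_2(t_1+t_2)$, it also contains every $t_1t_2(t_1+t_2)$. Apply this to the pairs $(r,r^3)$, $(r^2,r)$, $(1,r^2)$, $(1,r)$; signs are irrelevant modulo $U$. This gives $r^7\equiv r^5\equiv r^4\equiv r^2\equiv r \pmod U$. Since $r^7\in U$, you get $r\in U$, that is, $7r\in T_{7,2}$. All entries lie in $\mathbb{Z}[r]$, so this transfers to non-commutative $R$ exactly as you intended.
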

	It should be noted that in the proof, the author uses matrices with entries $1, 0, r, (1+r)$ which commute with each other, being polynomials in $\mathbb{Z}[r]$. Thus, we can use this result in our cases.

		\subsection{The order of the matrices $n = 4, 5, 6,$ and prime power $p = 7$}
		We begin by showing a calculation for the case of $n = 3$.
		\begin{lem}\label{lcalc73}
			Some interesting cycles are observed to belong to $T_{7,3}$ which are later helpful for the case of $T_{7,4}$.
			
			$C(b,g,e,e,e,f,c), C(b,c,b,g,e,f,c), C(a,d,e,a,d,f,g) \in T_{7,3}$.
		\end{lem}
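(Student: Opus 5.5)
The plan mirrors the proof of Lemma \ref{lcalc53}. We build sparse $3\times 3$ matrices whose entries are the letters of the target cycle, placed so that the closed walks of length $7$ in the weighted digraph of the matrix spell out the desired cyclic sum. We then cancel the unwanted closed walks with further sparse matrices, using that $T_{7,3}$ is an additive subgroup (Lemma \ref{subgrouplem}). By Theorem \ref{strthm} and Corollary \ref{orbcor}, $\tr(M^7)$ is the sum of $c^7$ over diagonal entries plus one cyclic sum $C(\cdot)$ for each orbit of closed walks of length $7$. So it suffices to control which closed walks occur. Note also that each $p$-th power $c^7$ of a diagonal entry lies in $T_{7,3}$ trivially.

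For $C(b,g,e,e,e,f,c)$, we read the word as a closed walk in which $e$ is a loop traversed three times. Put $e=a_{22}$, $f=a_{23}$, $c=a_{31}$, $b=a_{12}$, and $g$ as a second loop or edge. We try
\[
M=\begin{bmatrix}0&b&0\\0&e&f\\c&0&g\end{bmatrix},
\]
and read off the walks $1\to2\to2\to2\to2\to3\to3\to1$, reordering the letters cyclically as needed. Then $\tr(M^7)$ equals $e^7+g^7$ plus several cycles: the target ones, and ``parasitic'' ones built only from the loop–edge subgraphs $\{e\}$, $\{g\}$, $\{b,e,f,c,g\}$ with other multiplicities. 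We remove the parasitic terms in two steps. First, subtract $\tr(M'^7)$ where $M'$ is $M$ with one edge deleted, as was done with $F$ in Lemma \ref{lcalc53}. Second, take $M$ with a sign flipped on one loop, say $g\mapsto -g$ or $e\mapsto -e$. This separates walks by the parity of the number of uses of that loop, since $(-1)^{k}$ multiplies those terms.

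Combining $\tr(M^7)\pm\tr(M_{-}^7)$ isolates a class of cycles, but only up to a factor $2$. To remove it, invoke Lemma \ref{l7} (so that $7r\in T_{7,3}$ via Lemma \ref{subgrouplem}) together with Corollary \ref{coprimecor}, since $\gcd(2,7)=1$. The same scheme, with $e\mapsto \lambda e$ for $\lambda=\pm1$ and if necessary $\lambda=2$, separates the walks by the number of uses of $e$. Three uses of $e$ forces exactly the target word, up to the cycles using $g$ in place of an $e$. Those are handled by first zeroing $g$'s loop and then reintroducing it.

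For $C(b,c,b,g,e,f,c)$ and $C(a,d,e,a,d,f,g)$ we proceed the same way. The repeated letters $b,c$ (respectively $a,d$) are edges traversed twice. The designs are, for instance,
\[
\begin{bmatrix}0&b&0\\c&0&g\\f&e&0\end{bmatrix}
\]
(adjusting placements so that $bcbgefc$ is a closed walk), and
\[
\begin{bmatrix}0&a&0\\0&0&d\\e&f&0\end{bmatrix}
\]
with $g$ placed suitably. In each case we list the length-$7$ closed walks, sort them by the number of times each repeated edge is used, and separate these classes by the sign and scalar substitutions above. We then peel off unwanted lower-complexity cycles by subtracting traces of matrices with an edge removed, which contain exactly those cycles.

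The main obstacle is this bookkeeping. We must check that in each construction the only $7$-walk classes that survive after the sign and scalar eliminations are the target cycle, plus cycles already known to lie in $T_{7,3}$ (pure seventh powers, or cycles from sub-matrices). If a parasitic cycle with the same letter multiset as the target survives, such as $C(b,g,e,f,c,b,c)$ with a different arrangement, the fallback is to choose a different placement of the letters. The aim is a placement in which the target word is the unique closed walk with that letter multiset. We would try the placements first on a computer algebra enumeration of walks.
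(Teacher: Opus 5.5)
\textbf{Genuine gap: the third cycle.} Your treatment of $C(b,g,e,e,e,f,c)$ is sound, and your construction differs from the paper's. The paper uses $\begin{bmatrix} e&a&f\\ 0&0&b\\ g&c&0\end{bmatrix}$, flips the signs of $a$, $f$, $g$ and then of $b$, and reaches $8C(b,g,e,e,e,f,c)$. In your triangle with loops $e,g$ one gets $\tr(M^7)=e^7+g^7+\sum_{k=0}^{4}Q_k$, where $Q_k$ is the cycle with $k$ loops $e$ and $4-k$ loops $g$. The values $\lambda=\pm1$ cannot separate $Q_1$ from $Q_3$, but adding $\lambda=\pm2$ gives $12Q_3+252e^7$, which suffices. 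Note that $Q_3=C(b,e,e,e,f,g,c)$ is the target only after renaming letters, not after a cyclic shift; this is harmless. For $C(b,c,b,g,e,f,c)$, your displayed matrix does not contain $bcbgefc$ as a closed walk. However, placing $b,c,g,f,e$ at $(1,2),(2,1),(2,3),(3,2),(3,3)$ makes the target the only walk class with two $b$'s. Then $b\mapsto b,-b,0$ yields $2C(b,c,b,g,e,f,c)$, so this case can be completed within your scheme.

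The gap is $C(a,d,e,a,d,f,g)$, and it lies in the bookkeeping you postpone. A substitution $\ell\mapsto\lambda\ell$ multiplies each word by $\lambda^{m}$, where $m$ is the number of occurrences of $\ell$. Deleting an entry kills every word containing it. So neither device can separate two necklaces with the same letter content. In your design ($g$ is forced to $(2,1)$), that content class is $C(a,d,e,a,d,f,g)+C(a,g,a,d,f,d,e)$. Re-placing the letters does not help. The word $adeadfg$ forces $e$ and $f$ to leave the same vertex, so in a $3\times 3$ matrix with distinct entries the only placements are, up to relabelling vertices, yours or those in which $a$, $d$ or $e$ is a loop. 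Each of these multigraphs has two or three Eulerian circuits of this content. So the proposal does not prove the third membership. Nor does the paper, whose argument isolates only $8C(b,g,e,e,e,f,c)$, the one cycle later used in Theorem \ref{t74}.

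\textbf{The missing idea: specialise a letter to $1$.} Take $A=\begin{bmatrix} e&f\\ g&0\end{bmatrix}$ and grade $\tr(A^7)$ by the number of $f$'s. This gives $e^7$, then $C(e,e,e,e,e,f,g)$, then the words with two $f$'s, then $C(e,f,g,f,g,f,g)$. The substitutions $f\mapsto\pm f,\pm 2f$ put $2C(e,e,e,e,e,f,g)+2C(e,f,g,f,g,f,g)$ and $12\,C(e,f,g,f,g,f,g)$ in $T_{7,2}$. Hence $C(e,e,e,e,e,f,g)\in T_{7,2}$ by Lemma \ref{l7} and Corollary \ref{coprimecor}. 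Taking $e=1$ gives $C(1,1,1,1,1,f,g)=6fg+gf\in T_{7,2}$, so $fg-gf\in T_{7,2}$. Proposition \ref{p1} then places every $7$-cycle in $T_{7,2}\subseteq T_{7,3}$, in particular all three cycles in the lemma.
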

		\begin{proof}
			Consider
			$M_1=
			\begin{bmatrix}
				e & a & f \\
				0 & 0 & b \\
				g & c & 0 \\
			\end{bmatrix}$
			$M_2 = 
			\begin{bmatrix}
				e & -a & f \\
				0 & 0 & b \\
				g & c & 0 \\
			\end{bmatrix}
			$
			$M_3 = 
			\begin{bmatrix}
				e & a & -f \\
				0 & 0 & b \\
				g & c & 0 \\
			\end{bmatrix}$
			
			$M_4 = 
			\begin{bmatrix}
				e & a & f \\
				0 & 0 & b \\
				-g & c & 0 \\
			\end{bmatrix}$
			
			Then $\tr M_1^7 + \tr M_2^7 -\tr M_3^7 - \tr M_4^7 = 
			4(C(e,e,e,e,e,f,g)+ C(b,g,e,e,e,f,c)+ C(b,c,b,g,e,f,c)+			C(e,f,g,f,g,f,g))$ as sums of traces of matrices in $M_4(R)$.
			
			If we replace $b$ by $-b$, in the matrices above, we obtain $4(C(e,e,e,e,e,f,g)- C(b,g,e,e,e,f,c)+ C(b,c,b,g,e,f,c)+	C(e,f,g,f,g,f,g))$. Subtracting these two, we obtain $8C(b,g,e,e,e,f,c) \in T_{7,3}$. Using Lemma \ref{subgrouplem}, and Lemma \ref{l7}, $C(b,g,e,e,e,f,c) \in T_{7,n}$ for $3\leq n \leq 5$. 
		\end{proof}
			
		\begin{lem}\label{lbi7}
			Sums of seventh powers can be represented as a sum of a seventh power and some seven-cyclic sums.
		\end{lem}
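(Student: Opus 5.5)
We argue exactly as in Lemma \ref{lbi5}, replacing the exponent $5$ by $7$. First we treat two summands. For $a,b \in R$ we expand $(a+b)^7$ as the sum of all $2^7$ words of length $7$ in the letters $a,b$. The cyclic group $\langle\sigma\rangle$ generated by $\sigma=(1234567)$ acts on these words by cyclic rotation. Since $7$ is prime, the discussion preceding Proposition \ref{p3} shows that every orbit has size $1$ or $7$. The orbits of size $1$ are the constant words $a^7$ and $b^7$. Each of the remaining $(2^7-2)/7 = 18$ orbits contributes exactly the cyclic sum $C(a_1,\ldots,a_7)$ of any of its representatives $(a_1,\ldots,a_7)\in\{a,b\}^7$. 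Hence
\[
(a+b)^7 = a^7 + b^7 + \sum_{(a_1,\ldots,a_7)\in S} C(a_1,\ldots,a_7),
\]
where $S$ is a set of $18$ orbit representatives containing both letters. We do not need to list the $18$ cyclic sums explicitly; the orbit count is enough. If we want an explicit form, we group the representatives by the number of $b$'s: there are $1,3,5,5,3,1$ necklaces containing $1,\ldots,6$ letters $b$.

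Next we move everything but $(a+b)^7$ to the other side. Since $C$ is additive in each argument, $-C(a_1,\ldots,a_7)$ can be written as a single cyclic sum by negating one entry equal to $b$, which every non-constant representative has. This gives
\[
a^7+b^7 = (a+b)^7 + \sum_{S} C(a_1,\ldots,-b,\ldots,a_7),
\]
which is the statement for two summands.

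We then induct on the number $k$ of summands. Assume $\sum_{i=1}^k a_i^7 = C + b^7$, where $b=\sum_{i=1}^k a_i$ and $C$ is a sum of seven-cyclic sums. Applying the two-term identity to $b^7 + a_{k+1}^7$ gives
\[
\sum_{i=1}^{k+1} a_i^7 = C + C' + \Bigl(\sum_{i=1}^{k+1} a_i\Bigr)^7,
\]
where $C'$ is again a sum of seven-cyclic sums. This completes the induction.

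The only point needing care is the orbit decomposition. Each non-constant word must occur in exactly one cyclic sum, and each such cyclic sum must consist of $7$ distinct words. Both facts follow because $7$ is prime, by the orbit-stabilizer argument already recalled before Proposition \ref{p3}. We should also record, for later use with $n=4,5,6$, that all the cyclic sums produced have entries taken from only two symbols, one of them possibly negated. They are therefore specializations of the most arbitrary seven-cycles, so they lie in $T_{7,n}$ once those cycles are shown to lie in $T_{7,n}$.
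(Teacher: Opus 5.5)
Your proof is correct and takes the paper's route: expand $(a+b)^7$, group the non-constant words into $18$ cyclic sums, absorb the minus sign by negating one entry, and induct as in Lemma \ref{lbi5}. The only difference is that the paper lists the $18$ cyclic sums explicitly and negates the first entry, whereas you justify the grouping by the orbit count; your count agrees with the paper's list, whose distribution by number of $b$'s is $1,3,5,5,3,1$.
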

		\begin{proof}
			We prove the claim for $(a+b)^7$ where $a, b\in R$, and the remaining proof follows by induction, similar to Lemma \ref{lbi5}.
			
			Then $(a+b)^7 = a^7 + b^7 +
			C(a,a,a,a,a,a,b)+
			C(a,a,a,a,a,b,b)+
			C(a,a,a,a,b,a,b)+
			C(a,a,a,a,b,b,b)+
			C(a,a,a,b,a,a,b)+
			C(a,a,a,b,a,b,b)+
			C(a,a,a,b,b,a,b)+
			C(a,a,a,b,b,b,b)+
			C(a,a,b,a,a,b,b)+
			C(a,a,b,a,b,a,b)+
			C(a,a,b,a,b,b,b)+
			C(a,a,b,b,a,b,b)+
			C(a,a,b,b,b,a,b)+
			C(a,a,b,b,b,b,b)+
			C(a,b,a,b,a,b,b)+
			C(a,b,a,b,b,b,b)+
			C(a,b,b,a,b,b,b)+
			C(a,b,b,b,b,b,b)$.\\
			$= a^7 + b^7 + \displaystyle\sum_{\left(a_{1}, a_{2}, \ldots, a_{7}\right) \in S^\prime} C(a_1, a_2, a_3, a_4, a_5, a_6, a_7)$\\
			
			where $S^\prime \subseteq R^7$ is the set of representatives of $7$-cycles
			shown here, with at least two distinct elements.
					
			So, $a^7 + b^7  = (a+b)^7 - \sum_{\left(a_{1}, a_{2}, \ldots, a_{7}\right) \in S^\prime} C(a_1, a_2, a_3, a_4, a_5, a_6, a_7) = (a+b)^7 + \sum_{\left(a_{1}, a_{2}, \ldots, a_{7}\right) \in S^\prime} C(-a_1, a_2, a_3, a_4, a_5, a_6, a_7).$
						
		\end{proof}
		Another calculation which helps us establish that any $7$-cycle can be expressed as sums of seventh powers of matrices over $M_n(R)$, for $4 \leq n \leq 7$.
		\begin{thm}\label{t74} Let $R$ be a non-commutative ring with unity. Then, the cyclic sum $C(a,b,c,d,e,f,g) \in T_{7,4}$ for any $a, b, c, d, e, f, g \in R$.
		\end{thm}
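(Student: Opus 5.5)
Mimic Lemma \ref{lcalc53}: write down a sparse $4\times 4$ matrix $D$ whose associated digraph on $\{1,2,3,4\}$ contains exactly one closed walk of length $7$ that uses seven distinct edges, with those edges labelled $a,b,c,d,e,f,g$ in cyclic order. Then subtract correction matrices whose seventh-power traces remove every other closed walk of length $7$.

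\textbf{Choosing $D$.} A $7$-cycle through $4$ vertices must revisit vertices. I would take a digraph with two simple cycles sharing a vertex, of lengths $3$ and $4$:
\begin{itemize}
\item a $3$-cycle $1\to 2\to 3\to 1$ with labels $a,b,c$;
\item a $4$-cycle $1\to 4\to 3\to 2\to 1$ with labels $d,e,f,g$.
\end{itemize}
The two cycles together use edges $12,23,31,14,43,32,21$. However, $23$ and $32$ together form a $2$-cycle, which creates extra closed walks. Concretely, I would choose the placement so that the cycles meet at one vertex and otherwise use disjoint edges with as few short cycles as possible. A cleaner option is to make one vertex carry a loop labelled $c$, as in Lemma \ref{lcalc53}; that lemma's trick was exactly a loop plus a path.

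The closed walks of length $7$ in such a graph are compositions of its short simple cycles (loops, $2$-cycles, $3$-cycles, and so on) whose lengths sum to $7$. Hence $\tr(D^7)$ equals $C(a,b,c,d,e,f,g)$ plus cyclic sums corresponding to other decompositions of $7$. Each of these other terms uses only a proper sub-collection of edges, or repeats edges.

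\textbf{Removing the unwanted terms.} This follows Lemma \ref{lcalc73}. For each unwanted term, I would consider matrices obtained from $D$ by deleting one of the edges $a,\dots,g$ (setting it to $0$), or by negating an entry. Signed sums of $\tr(M^7)$ over such variants act as inclusion–exclusion.

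Let $D_S$ denote $D$ with the entries in $S\subseteq\{a,\dots,g\}$ set to zero. The term $C(a,\dots,g)$ is the only one in which all seven labels appear exactly once. Therefore
\[
\sum_{S}(-1)^{|S|}\tr(D_S^7)
\]
kills every walk missing some label. It leaves $C(a,\ldots,g)$ plus walks that use all seven edges but with some edge repeated, and length $7$ forbids that. So this sum equals exactly $C(a,\ldots,g)$ up to other closed walks containing all seven labels.

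Any closed walk of length $7$ using all seven distinct edges is an Eulerian circuit of the graph. Its words differ from $abcdefg$ only if the graph admits several Eulerian circuits. In the figure-eight shape (two cycles meeting at one vertex) there is essentially one circuit up to rotation, namely $a\,b\,c$ followed by $d\,e\,f\,g$. The only further concern is the splitting at the shared vertex, which yields the same cyclic word. Hence the inclusion–exclusion sum is $C(a,b,c,d,e,f,g)$ itself, or a small multiple of it.

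If a multiple $kC$ appears, for instance from symmetry, Lemma \ref{subgrouplem} and Lemma \ref{l7} give $7C\in T_{7,4}$. Corollary \ref{coprimecor} then finishes the argument, provided $\gcd(k,7)=1$.

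\textbf{Main obstacle.} The crux is realizing a figure-eight of total length $7$ inside $4$ vertices. Two simple cycles sharing exactly one vertex need $(\ell_1-1)+(\ell_2-1)+1$ vertices. With $\ell_1+\ell_2=7$ this is $6$ vertices, which is too many for $n=4$.

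I would therefore allow loops, as in Lemma \ref{lcalc53}, or more sharing. For example, take the $3$-cycle $1\to2\to3\to1$ and the $4$-cycle $1\to4\to2\to3$? That reuses $2\to3$, so it does not work. A better choice is a loop at vertex $3$ plus two cycles, $1\to2\to1$ and $1\to3\to4\to1$, with the loop attached at $3$. Here the $7$-circuit is $1\to2\to1\to3\to3\to4\to1$, and it uses $7$ distinct edges on $4$ vertices.

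The cost is that the graph now has several Eulerian circuits (the $2$-cycle and the loop can be traversed in different orders), which produce a few other all-label cyclic sums. I would handle these by replacing individual entries with negatives, in the style of Lemma \ref{lcalc73}, to separate them. Alternatively, I would express them in terms of $C(a,\ldots,g)$ with relabelled arguments and solve the resulting small linear system. This bookkeeping is the step I expect to be hardest.
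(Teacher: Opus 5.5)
\textbf{There is a genuine gap: the proof needs a digraph with a unique Eulerian circuit, and you never exhibit one.}

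Your inclusion--exclusion device is sound. Because $T_{7,4}$ is an additive group, $\sum_{S}(-1)^{|S|}\tr(D_S^7)\in T_{7,4}$. This sum is exactly the sum of all closed walks of length $7$ that use every one of the seven labelled edges, i.e.\ the Eulerian circuits of the digraph of $D$, taken with all starting points. So everything hinges on exhibiting a $7$-edge digraph on $4$ vertices with a \emph{unique} Eulerian circuit.

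Your two candidates both fail:
\begin{itemize}
\item Your final concrete choice fails outright. The walk $1\to2\to1\to3\to3\to4\to1$ has six steps, and the graph (edges $1\to2$, $2\to1$, $1\to3$, $3\to3$, $3\to4$, $4\to1$) has only six edges. It cannot carry seven distinct labels.
\item Your first candidate, the $3$-cycle $1\to2\to3\to1$ with the $4$-cycle $1\to4\to3\to2\to1$, is a genuine $7$-edge Eulerian digraph. But it has three Eulerian circuits, $abcdefg$, $abfgdec$ and $agdefbc$. Inclusion--exclusion therefore returns $C(a,b,c,d,e,f,g)+C(a,b,f,g,d,e,c)+C(a,g,d,e,f,b,c)$. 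Every label occurs exactly once in each word, so no sign change or deletion separates these terms. The ``small linear system'' over relabellings is never set up or solved.
\end{itemize}
The step you flag as hardest is precisely the one missing.

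\textbf{How the paper does it.} The ``loop plus path'' shape you mention in passing from Lemma~\ref{lcalc53} is exactly what is needed, and it is what the paper uses. Take the tridiagonal matrix with $a,b,c$ on the superdiagonal, $g,f,e$ on the subdiagonal and $d$ in position $(4,4)$. This is the bidirected path on $1,2,3,4$ with a loop at $4$, and its only Eulerian circuit is $abcdefg$. The paper flips the signs of $b$ and $e$ to isolate $4\bigl(C(a,b,c,d,e,f,g)+C(b,c,d,d,d,e,f)\bigr)$. For this, the four traces must be weighted by the product of the two signs; the combination as printed, weighted by the sign of $e$ alone, would cancel $C(a,b,c,d,e,f,g)$. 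It then removes the second term with Lemma~\ref{lcalc73}, and finishes with Lemma~\ref{l7} and Corollary~\ref{coprimecor} since $\gcd(4,7)=1$.

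\textbf{How your method closes with this matrix.} With this same $D$, your approach closes at once and needs none of those auxiliary lemmas:
\begin{itemize}
\item Without the loop the path is bipartite, so every closed walk of odd length uses the loop $d$.
\item A closed walk that also uses $a$ visits both ends of the path. It must therefore cross each of the three path edges in both directions, which together with the loop already takes all seven steps.
\end{itemize}
Hence $\tr(D^7)-\tr(D_{\{a\}}^7)=C(a,b,c,d,e,f,g)$, a difference of just two traces of seventh powers.
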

		\begin{proof}

			Consider 
			$M_1 = \begin{bmatrix}
				0 & a & 0 & 0 \\
				g & 0 & -b & 0 \\
				0 & f & 0 & c \\
				0 & 0 & e & d \\
			\end{bmatrix}$, 
			$M_2 = \begin{bmatrix}
				0 & a & 0 & 0 \\
				g & 0 & b & 0 \\
				0 & f & 0 & c \\
				0 & 0 & e & d \\
			\end{bmatrix}$,
			
			$M_3 = \begin{bmatrix}
				0 & a & 0 & 0 \\
				g & 0 & -b & 0 \\
				0 & f & 0 & c \\
				0 & 0 & -e & d \\
			\end{bmatrix}$,	$
			M_4 = \begin{bmatrix}
				0 & a & 0 & 0 \\
				g & 0 & b & 0 \\
				0 & f & 0 & c \\
				0 & 0 & -e & d \\
			\end{bmatrix}$.
			
			Then $\tr(M_1^7 + M_2^7 - M_3^7 - M_4^7) = 4(C(b,c,d,d,d,e,f)+
			C(a,b,c,d,e,f,g))$. From the case of $3\times 3$ matrices, $C(b,c,d,d,d,e,f)\in T_{7,3}$ by Lemma \ref{subgrouplem}, Lemma \ref{lcalc73} and Corollary \ref{coprimecor}. Thus, $C(a,b,c,d,e,f,g) \in T_{7,n}$ for $4 \leq n \leq 6$.
			
		\end{proof}
		\begin{thm}\label{tcalc74}
			Let $R$ be a non-commutative ring with unity. Let $T_{7,n}$ be the set of those elements of $R$ that can be expressed as sums of traces of seventh powers $M_n(R)$ for $4 \leq n \leq 6$. 
			\begin{enumerate}
				\item[(i)] For $a,b,c,d,e,f,g \in R$, all of the cyclic sums $C(a,b,c,d,e,f,g) \in T_{7,n}$. Also $a^7 \in T_{7,n}$.
				\item[(ii)] $T_{7,n}=\biggl\{\displaystyle \sum_{\left(a_{1}, a_{2}, \ldots, a_{7}\right) \in R^7} C\left(a_{1}, a_{2}, \ldots, a_{7}\right)+\sum_{j=1}^{l} c_{j}^{7} \mid l \geq 1, a_{i}, c_{j} \in R\biggr\} .$
				\item[(iii)] $T_{7,n}=\biggl\{\displaystyle \sum_{\left(a_{1}, a_{2}, \ldots, a_{7}\right) \in R^7} C\left(a_{1}, a_{2}, \ldots, a_{7}\right)+c^{7} \mid a_{i}, c \in R \biggr\}$	
				\item[(iv)] $T_{7,n}=\biggl\{\displaystyle\sum_{j=1}^{q}\left(a_{j} b_{j}-b_{j} a_{j}\right)+\sum_{j=1}^{l} c_{j}^{7}+7 r \mid a_{j}, b_{j}, c_{j}, r\in R, q \geq 1, l \geq 1\biggr\}.$
				\item[(v)] $T_{7,n}=\biggl\{\displaystyle\sum_{j=1}^{q}\left(a_{j} b_{j}-b_{j} a_{j}\right)+c^{7}+7 r \mid a_{j}, b_{j}, c, r \in R, q \geq 1, l \geq 1\biggr\}$.
			\end{enumerate}
		\end{thm}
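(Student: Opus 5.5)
The plan is to follow the pattern of the proof of Theorem \ref{t53}, part by part, for $4 \le n \le 6$.

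\emph{Part (i).} By Theorem \ref{t74}, every cyclic sum $C(a,b,c,d,e,f,g)$ lies in $T_{7,4}$. Lemma \ref{subgrouplem} gives $T_{7,4} \subset T_{7,5} \subset T_{7,6}$, so the same holds in $T_{7,n}$ for $n=5,6$. The power $a^7$ is the trace of the seventh power of the matrix with $a$ in position $(1,1)$ and zeros elsewhere. Taking $a_1=r$ and all other $a_i=1$ gives $C(r,1,\ldots,1)=7r$. Hence $7r \in T_{7,n}$, which also agrees with Lemma \ref{l7} together with Lemma \ref{subgrouplem}.

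\emph{Part (ii).} By (i) and the subgroup property (Lemma \ref{subgrouplem}), the right-hand side is contained in $T_{7,n}$. For the reverse inclusion, Theorem \ref{strthm} writes each $\tr(M^7)$ as a sum of words $a_{ik_1}a_{k_1k_2}\cdots a_{k_6 i}$. Corollary \ref{orbcor} and the corollary following it group these words into full orbits under $(1234567)$. Each orbit has size $1$ or $7$, since $7$ is prime. Orbits of size $1$ are the terms $a_{jj}^7$. Orbits of size $7$ are exactly cyclic sums $C(a_1,\ldots,a_7)$. A sum of such traces therefore has the stated form.

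\emph{Part (iii).} Starting from (ii), Lemma \ref{lbi7} and induction on $l$ merge the seventh powers $c_1^7+\cdots+c_l^7$ into a single $c^7$. This costs only finitely many further cyclic sums, with one entry negated as at the end of that lemma.

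\emph{Parts (iv) and (v).} Proposition \ref{p1} with $k=7$ shows each cyclic sum is a sum of commutators plus an element of $7R$. So the sets in (ii) and (iii) lie inside those in (iv) and (v). Conversely, every commutator $ab-ba$ must be shown to lie in $T_{7,n}$. I would write
\[
ab-ba = C(a,b,1,1,1,1,1) - 7ba .
\]
This holds because $C(a,b,1,\ldots,1)$ consists of five copies of $ab$ and two copies of $ba$. Since $T_{7,n}$ is a group, $C(a,b,1,\ldots,1)$, $7ba$, and every $7r$ lie in $T_{7,n}$ by (i), and so does each seventh power. This gives both inclusions for (iv). Part (v) then follows from (iv) after combining the $c_j^7$ into one seventh power by Lemma \ref{lbi7}; the leftover cyclic sums are again commutators modulo $7R$, absorbed into the commutator and $7r$ terms.

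The main obstacle is the reverse inclusion in (iv), namely that arbitrary commutators and multiples of $7$ lie in $T_{7,n}$. The identity above resolves it once Theorem \ref{t74} is in hand. Everything else is bookkeeping with Lemma \ref{subgrouplem}.
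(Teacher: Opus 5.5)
Your route is the paper's route. The paper's proof of this theorem just says to rerun the proof of Theorem~\ref{t53} with Theorem~\ref{t74} in place of Lemma~\ref{lcalc53}. Your parts (i)--(iii), and the inclusion of $T_{7,n}$ in the sets of (iv) and (v) via Proposition~\ref{p1}, do exactly that.

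The problem is the step you add, the identity $ab-ba = C(a,b,1,1,1,1,1) - 7ba$, which is false. Of the seven rotations of the word $a\,b\,1\,1\,1\,1\,1$, only the one that cuts between $a$ and $b$ gives $ba$; the other six give $ab$. Hence
\[
C(a,b,1,1,1,1,1) = 6ab + ba, \qquad C(a,b,1,1,1,1,1) - 7ba = 6(ab-ba).
\]
Even with your own count of $5ab+2ba$ you would get $5(ab-ba)$, not $ab-ba$. As written, your argument that every commutator lies in $T_{7,n}$ does not go through.

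The repair is one line. Write $ab - ba = 7ab - C(a,b,1,1,1,1,1)$. Here $7ab = C(ab,1,1,1,1,1,1) \in T_{7,n}$ by (i), so $ab-ba \in T_{7,n}$ because $T_{7,n}$ is a group (Lemma~\ref{subgrouplem}). Alternatively, $6(ab-ba)$ and $7(ab-ba) = C(ab,1,\ldots,1) - C(ba,1,\ldots,1)$ both lie in $T_{7,n}$, and Corollary~\ref{coprimecor} gives the result.

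With that correction your proof is complete, and it does more than the paper's. The proof of Theorem~\ref{t53}(iv), which the paper's proof invokes, only cites Proposition~\ref{p1}. That gives $T_{7,n}$ contained in the right-hand side, but it never checks that arbitrary commutators belong to $T_{7,n}$. That check is exactly what the reverse inclusions in (iv) and (v) require.
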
 
		\begin{proof}
			Proof follows similar to the proof of Theorem \ref{t53}, using Theorem \ref{t74}.
		\end{proof}
		
		To conclude, we state:
		\begin{thm} Let $R$ be a non-commutative ring with unity. $A \in M_n(R)$ is a sum of seventh powers $M_n(R)$ if and only if $\tr(A)$ is a sum of a seventh power and some specific seven-cycles for $4 \leq n \leq 6$.
		\end{thm}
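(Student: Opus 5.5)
The argument follows the proof of Theorem \ref{t52final} and Theorem \ref{t53final}, with Theorem \ref{tcalc74} in place of Theorem \ref{t52} and Theorem \ref{t53}. Fix $n \in \{4,5,6\}$.

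\emph{Forward direction.} Suppose $A = \sum_{i=1}^{m} M_i^7$ with $M_i \in M_n(R)$. Since trace is additive, $\tr(A) = \sum_{i=1}^m \tr(M_i^7)$.

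By Theorem \ref{strthm} together with Corollary \ref{orbcor} and the corollary following it, the terms of each $\tr(M_i^7)$ group into two kinds:
\begin{enumerate}
\item[(a)] seventh powers of the diagonal entries of $M_i$, which are exactly the orbits of size $1$;
\item[(b)] full orbits of size $7$, each of which is a cyclic sum $C(a_1,\ldots,a_7)$ of entries of $M_i$.
\end{enumerate}
Lemma \ref{lbi7}, applied inductively as in Lemma \ref{lbi5}, combines all the diagonal seventh powers into a single seventh power $\alpha^7$ plus further seven-cyclic sums. Hence $\tr(A) = \alpha^7 + \tilde C$, where $\tilde C$ is a sum of seven-cycles.

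\emph{Converse.} Suppose $\tr(A) = \alpha^7 + \tilde C$, where $\tilde C$ is a finite sum of cyclic sums $C(a_1,\ldots,a_7)$. By Theorem \ref{t74}, extended to $n = 5, 6$ by the inclusion $T_{7,4} \subset T_{7,n}$ from Lemma \ref{subgrouplem}, each such cyclic sum lies in $T_{7,n}$. Since $T_{7,n}$ is an additive subgroup, also by Lemma \ref{subgrouplem}, we get $\tilde C \in T_{7,n}$. The remaining term is handled directly: $\alpha^7 = \tr(N^7)$ for $N = \operatorname{diag}(\alpha,0,\ldots,0)$. Therefore $\tr(A)$ is a sum of traces of seventh powers in $M_n(R)$, and Theorem \ref{t2} gives that $A$ itself is a sum of seventh powers in $M_n(R)$.

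\emph{Main obstacle.} The real content sits in Theorem \ref{t74}: showing that an arbitrary seven-cycle lies in $T_{7,4}$. That proof reduces to $8C(b,c,d,d,d,e,f) \in T_{7,3}$ (Lemma \ref{lcalc73}) and then needs a coefficient coprime to $8$. For that I would use $7r \in T_{7,2} \subset T_{7,n}$ from Lemma \ref{l7} and apply Corollary \ref{coprimecor} with $u = 8$, $v = 7$. The extra factor $4$ produced in the proof of Theorem \ref{t74} is removed by the same argument.

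The only other point to check is that the cyclic sums arising in the forward direction need no restriction for $n \ge 4$. This holds because every seven-cycle, whatever pattern of repeated entries it has, is a specialization of the fully general $C(a,b,c,d,e,f,g)$, so "some specific seven-cycles" may be taken to mean arbitrary ones.
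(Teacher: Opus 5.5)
Your proposal is correct and is essentially the paper's proof. The paper only defers to the argument of Theorem \ref{t52final}, and you carry that out in full: the orbit decomposition of each $\tr(M_i^7)$ plus Lemma \ref{lbi7} for one direction, and Theorem \ref{t74} lifted by Lemma \ref{subgrouplem}, $\alpha^7=\tr(\operatorname{diag}(\alpha,0,\ldots,0)^7)$ and Theorem \ref{t2} for the converse, with your coprimality bookkeeping ($7r\in T_{7,2}$ from Lemma \ref{l7} against the factors $8$ and $4$) matching Lemma \ref{lcalc73} and Theorem \ref{t74}. If you check Theorem \ref{t74} by hand, note that the printed combination $\tr(M_1^7+M_2^7-M_3^7-M_4^7)$ keeps only walks using $b$ an even number of times, so it kills both claimed cycles; the identity $4\bigl(C(b,c,d,d,d,e,f)+C(a,b,c,d,e,f,g)\bigr)$ holds for $\tr(M_2^7+M_3^7-M_1^7-M_4^7)$ instead, which leaves your argument unaffected.
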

		\begin{proof}
			The one-way implication follows from Lemma 5. The proof of the converse follows similar to the proof of the Theorem \ref{t52}, using Lemma \ref{lbi7} .
		\end{proof}

	\begin{rem}
	Let $R$ be a non-commutative ring with unity. As it is found that all the $7$-cycles that appear in the trace of an arbitrary matrix of order $2$ and $3$ were not obtained individually as sums of traces of over $M_3(R)$, the case remains open for $n = 2, 3, p = 7$. However, this was obtained for the case $n =2, p =5$, yet, the `most general' $5$-cycle was not found to be a member of $T_{5,2}$. We conjecture that there may be a lower bound, $q$, such that any arbitrary $p$-cycle can be obtained as sums of traces of $p$-th powers over $M_n(R)$, only for $q \leq n < p$.
	\end{rem}
	
	\section*{Disclosure statement}
	No potential conflict of interest was reported by the author(s). The authors report generative AI was not used in their research or preparation of this manuscript.
	
	\section*{Acknowledgements}
	The authors would like to thank the creators of the software Wolfram-Mathematica, its NCAlgebra\cite{NCA} package, and the Python language. The first author thanks Mrs. Ashwini Kocharekar for the help with programming.
	
	\section*{ORCID}
	Tanay Kochrekar \orcidlink{0009-0003-2838-3963} http://orcid.org/0009-0003-2838-3963
	
	A. S. Garge \orcidlink{0000-0001-9276-3985}
	http://orcid.org/0000-0001-9276-3985
	
	\section*{Author Contributions}
	CRediT: \textbf{Tanay Kochrekar}: Conceptualization, Formal analysis, Investigation,	Methodology, Validation, Writing-original draft, review	\& editing. \textbf{A. S. Garge}: Conceptualization, Formal analysis, Investigation,	Methodology, Supervision, Validation, Writing-review \& editing.
	
\bibliography{waring-noncomm-bib}

\end{document}